\newtheorem{theorem}{Theorem}[section]
\newtheorem{lemma}[theorem]{Lemma}
\newtheorem{corollary}[theorem]{Corollary}
\newtheorem{question}[theorem]{Question}
\newtheorem{remark}[theorem]{Remark}
\newtheorem{proposition}[theorem]{Proposition}
\newproof{proof}{Proof}
\numberwithin{equation}{section}
\newcommand{\CC}{C_k}
\newcommand{\NN}{\mathbb{N}}
\newcommand{\ZZ}{\mathbb{Z}}
\newcommand{\w}{\omega}
\newcommand{\KK}{\mathcal{K}}
\newcommand{\IR}{\mathbb{R}}
\newcommand{\e}{\varepsilon}
\newcommand{\cl}{\mathrm{cl}}
\renewcommand{\phi}{\varphi}
\newcommand{\supp}{\mathrm{supp}}
\newcommand{\TT}{\mathbb{T}}
\newcommand{\SI}{\underrightarrow{\mbox{\rm s-ind}}_n\,}
\begin{document}

\begin{frontmatter}

\title{Topological properties of spaces of Baire functions}

\author{S.~Gabriyelyan}
\ead{saak@math.bgu.ac.il}
\address{Department of Mathematics, Ben-Gurion University of the Negev, Beer-Sheva, P.O. 653, Israel}

\begin{abstract}
A fundamental result proved by Bourgain,  Fremlin and  Talagrand states that the space $B_1(M)$ of Baire one functions over a Polish space $M$ is an angelic space. Stegall extended this result by showing that the class $B_1(M,E)$ of Baire one functions valued in a normed space $E$ is angelic.
These results motivate our study of various topological properties in the classes $B_\alpha(X,G)$ of Baire-$\alpha$ functions, where $\alpha$ is a nonzero countable ordinal, $G$ is a metrizable non-precompact abelian group and $X$ is a $G$-Tychonoff first countable space. In particular, we show that (1) $B_\alpha(X,G)$ is a $\kappa$-Fr\'{e}chet--Urysohn space and hence it is an Ascoli space, and (2) $B_\alpha(X,G)$ is a $k$-space iff $X$ is countable.

\end{abstract}

\begin{keyword}
Baire functions  \sep $\kappa$-Fr\'{e}chet--Urysohn \sep Ascoli  \sep $k$-space  \sep  normal space

\MSC[2010]  46A03 \sep   46A08 \sep   54C35

\end{keyword}

\end{frontmatter}



\section{Introduction}


For Tychonoff (=completely refular and Hausdorff) spaces $X$ and $Y$, we denote by $C(X,Y)$ the family of all continuous functions from $X$ to $Y$. We say that  $X$ is {\em $Y$-Tychonoff} if for every closed subset $A$ of $X$, a point $x\in X\setminus A$, and two distinct points $y,z\in Y$ there exists $f\in C(X,Y)$ such that $f(x)=y$ and $f(A)=\{ z\}$, i.e., $x$ and $A$ are completely separated by a continuous function from $X$ to $Y$. In what follows we consider only $Y$-Tychonoff spaces to have the space $C(X,Y)$ sufficiently rich in the sense of the definition of $Y$-Tychonoff spaces. If $Y$ is an absolutely convex subset of a locally convex space, then every Tychonoff space $X$ is $Y$-Tychonoff (see Lemma \ref{l:E-Tychonoff} below).


The  spaces of Baire class $\alpha$ functions were defined and studied by Ren\'{e}~Baire in his PhD thesis \cite{Baire}. Let $Y$ be a Tychonoff space and let $X$ be a $Y$-Tychonoff space. The {\em Baire class zero} $B_0(X,Y)$ is the class $C(X,Y)$, and the {\em Baire class one} $B_1(X,Y)$ is the family of all functions from $X$ to $Y$ which are pointwise limits of sequences of continuous  functions. If $\alpha>1$ is a countable ordinal, the  {\em class $B_\alpha(X,Y)$ of Baire-$\alpha$ functions} is the family of all functions  $f:X\to Y$ such that  there exists a sequence $\{ f_n\}_{n\in\NN} \subseteq \bigcup_{i<\alpha} B_i(X,Y)$ which pointwise converges to $f$. The spaces $B_\alpha(X,Y)$ are endowed with the pointwise topology induced from the direct product $Y^X$. If $Y=\IR$, set $B_\alpha(X):=B_\alpha(X,\IR)$.

The most important case is the case when $X=M$ is a Polish space. The compact subsets of $B_1(M)$ (called Rosenthal compact) have been studied intensively by Rosenthal \cite{Rosenthal-compact}, Bourgain, Fremlin and Talagrand \cite{BFT}, Godefroy \cite{Godefroy}, Todor\v{c}evi\'{c} \cite{Todorcevic} and others. The following fundamental result is proved in \cite{BFT}.
\begin{theorem}[\cite{BFT}] \label{t:angelic-BFT}
If $X$ is a Polish space, then $B_1(M)$ is angelic.
\end{theorem}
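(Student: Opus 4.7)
The plan is to verify the two defining properties of angelicity for $B_1(M)$ in the pointwise topology inherited from $\IR^M$: first, that every pointwise relatively countably compact subset is pointwise relatively compact, and second, that every point in the closure of such a set is reached by a sequence drawn from it. Both properties will be obtained simultaneously by showing that any pointwise compact $K\subseteq B_1(M)$ is sequentially compact and that the pointwise closure of any relatively countably compact $H\subseteq B_1(M)$ stays inside $B_1(M)$.

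The starting point I would use is Baire's classical internal characterization of $B_1(M)$: a bounded function $f\colon M\to\IR$ lies in $B_1(M)$ if and only if for every nonempty closed $F\subseteq M$ the restriction $f|_F$ admits at least one point of continuity (equivalently, for every $\e>0$ the set of $x\in F$ at which $f|_F$ has oscillation $<\e$ is comeager in $F$). Since $M$ is Polish, every closed $F\subseteq M$ is itself Polish, so Baire category is available uniformly across the family of closed subsets.

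The core of the argument is Rosenthal's $\ell_1$-dichotomy: any uniformly bounded sequence $(f_n)\subseteq\IR^M$ admits either a pointwise convergent subsequence or a subsequence equivalent to the unit vector basis of $\ell_1$. Applied inside a pointwise compact $K\subseteq B_1(M)$, the $\ell_1$-alternative must be excluded, for an $\ell_1$-subsequence produces, via independence of signs and a standard convex-combination argument, a closed $F\subseteq M$ and $\delta>0$ on which every pointwise cluster point has oscillation $\geq\delta$ on every relatively open piece of $F$, contradicting the Baire-category characterization above. This already yields sequential compactness of $K$, which in turn gives sequential density inside closures of relatively compact sets. To complete the argument one shows the closure-stability property that $\overline H^{\,\IR^M}\subseteq B_1(M)$ whenever $H\subseteq B_1(M)$ is pointwise relatively countably compact: a counterexample $f\in\overline H^{\,\IR^M}$ violating the oscillation condition on some closed $F\subseteq M$ would, via a diagonal Baire-category argument on $F$, force a cluster point of a sequence in $H$ outside $B_1(M)$, contradicting countable compactness together with the sequential conclusion.

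The main obstacle is the exclusion of the $\ell_1$-alternative. Rosenthal's dichotomy itself is the hard combinatorial kernel, requiring the independence argument on Boolean combinations of indicator-type sets; and translating its failure inside $K$ into a quantitative oscillation obstruction on a Polish closed subset of $M$ requires a delicate blend of Mazur-style convex-combination reasoning with the Baire category content of the oscillation criterion.
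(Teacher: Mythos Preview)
The paper does not contain a proof of this theorem: it is quoted as a background result with the citation \cite{BFT} and is used only to motivate the subsequent investigation of topological properties of $B_\alpha(X,G)$. There is therefore nothing in the paper to compare your proposal against.

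As for your outline itself, it follows the classical Bourgain--Fremlin--Talagrand/Rosenthal route, but in its present form it is only a heuristic sketch rather than a proof. Two concrete issues: first, you invoke Rosenthal's dichotomy for ``uniformly bounded'' sequences, yet a relatively countably compact subset of $B_1(M)\subseteq\IR^M$ is a priori only pointwise bounded, not uniformly bounded; one must reduce to the bounded case (e.g.\ by composing with a homeomorphism $\IR\to(-1,1)$ and checking this preserves $B_1$ and the relevant compactness notions). Second, your description of how the $\ell_1$-alternative contradicts the Baire oscillation criterion, and of the ``diagonal Baire-category argument'' for closure stability, are assertions rather than arguments: these are exactly the substantial parts of \cite{BFT}, and the proposal gives no indication of how they would actually be carried out. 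What you have written identifies the right ingredients but does not yet constitute a proof.
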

Recall that a Hausdorff topological space $X$ is called an {\em angelic space} if (1) every relatively countably compact subset of $X$ is relatively compact, and (2) any compact subspace of $X$ is Fr\'{e}chet--Urysohn. In \cite[Corollary~7]{Stegall}, Stegall generalized Theorem \ref{t:angelic-BFT} as follows:
\begin{theorem}[\cite{Stegall}] \label{t:angelic-Stegall}
If $X$ is a Polish space and $G$ is a metric space, then $B_1(M,G)$ is angelic.
\end{theorem}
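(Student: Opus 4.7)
My plan is to reduce Stegall's theorem to the scalar Bourgain--Fremlin--Talagrand theorem (Theorem \ref{t:angelic-BFT}) by using the metric on $G$ to embed $B_1(M,G)$ into a countable product of copies of $B_1(M)$. First I observe that every Baire-1 function $f:M\to G$ has separable range: if $f=\lim_n f_n$ pointwise with $f_n\in C(M,G)$, then each $f_n(M)$ is separable as the continuous image of the separable Polish space $M$, so $f(M)\subseteq\overline{\bigcup_n f_n(M)}$ is separable. Both conditions defining angelicity refer only to countably many functions at a time -- a sequence and at most one candidate cluster point -- whose pooled images lie in some separable closed subspace $G_0\subseteq G$. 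This suggests reducing to the statement that $B_1(M,G_0)$ is angelic for every separable closed subspace $G_0$.

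For such a $G_0$, fix a countable dense subset $\{g_n\}_{n\in\NN}$ and define
\[
\Phi:B_1(M,G_0)\longrightarrow B_1(M)^{\NN},\qquad \Phi(f):=\bigl(d(f(\cdot),g_n)\bigr)_{n\in\NN}.
\]
Each coordinate $d(f(\cdot),g_n)$ is a real-valued Baire-1 function, being the composition of $f$ with the continuous map $d(\cdot,g_n):G_0\to\RR$. The map $\Phi$ is continuous in the pointwise topologies, and the density of $\{g_n\}$ in $G_0$ makes it a topological embedding: if $\Phi(f^\alpha)\to\Phi(f)$ pointwise, fix $x\in M$ and $\eps>0$, choose $n$ with $d(f(x),g_n)<\eps/4$, and the triangle inequality yields $d(f^\alpha(x),f(x))<\eps$ eventually, hence $f^\alpha(x)\to f(x)$ in $G_0$. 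By Theorem \ref{t:angelic-BFT}, $B_1(M)$ is angelic; classical results of Pryce ensure that countable products of angelic spaces are angelic and that angelicity is hereditary to arbitrary subspaces. Therefore $B_1(M)^{\NN}$ is angelic, and so are its subspace $\Phi(B_1(M,G_0))$ and the homeomorphic space $B_1(M,G_0)$.

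The main obstacle I expect is making the separable reduction fully rigorous. A relatively countably compact $H\subseteq B_1(M,G)$ need not have separable total image, and worse, the class $B_1(M,G_0)$ is not obviously identifiable with $\{f\in B_1(M,G):f(M)\subseteq G_0\}$, since the continuous functions approximating a Baire-1 map into $G_0$ may leave $G_0$. The resolution is to work one sequence and one candidate cluster point at a time: given $(h_k)\subseteq H$ and a candidate limit $f\in\overline{H}\subseteq B_1(M,G)$, the separable subspace $G_0:=\overline{f(M)\cup\bigcup_k h_k(M)}$ contains all the relevant images, and one performs the Fr\'{e}chet--BFT argument inside $B_1(M)^{\NN}$ to extract a subsequence of $(h_k)$ that converges pointwise to $f$ in $B_1(M,G)$; verifying that this extraction is compatible with the ambient topology is the delicate technical point.
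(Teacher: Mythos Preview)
The paper does not prove this theorem at all: Theorem~\ref{t:angelic-Stegall} is stated with attribution to Stegall \cite[Corollary~7]{Stegall} and is used only as motivation for the paper's own results. There is therefore no proof in the paper to compare your attempt against.

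That said, your sketch is in the right spirit but contains a genuine gap. The assertion that ``countable products of angelic spaces are angelic'' is \emph{not} a theorem of Pryce, and in fact it is false in general: property~(1) in the definition of angelicity passes to arbitrary products, but property~(2) does not, since there exist Fr\'echet--Urysohn compacta whose square already fails to be Fr\'echet--Urysohn. What rescues your argument in this particular situation is that $B_1(M)^{\NN}$ embeds naturally into $B_1(M\times\NN)$ (if $(f_n)_n\in B_1(M)^{\NN}$, define $F(x,n)=f_n(x)$ and approximate coordinatewise using that $\NN$ is discrete); since $M\times\NN$ is Polish, Theorem~\ref{t:angelic-BFT} applies directly, and angelicity \emph{is} hereditary to subspaces. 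With this correction the separable-range reduction and the embedding $\Phi(f)=\bigl(d(f(\cdot),g_n)\bigr)_n$ give a valid proof. Your acknowledged ``delicate technical point'' about the separable reduction can in fact be bypassed: once you know $B_1(M,G_0)$ is angelic for every closed separable $G_0\subseteq G$, and that each relatively countably compact set in $B_1(M,G)$ lives in some such $B_1(M,G_0)$ (because a cluster point of a sequence of separable-range functions again has separable range), angelicity of $B_1(M,G)$ follows directly from the definition.
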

In particular, if $G$ is a Banach space, then  $B_1(M,G)$ is angelic. This last result was generalized by Mercourakis and Stamati in \cite[Theorem~1.8]{Mer-Sta}.


For the general case when $X$ is a Tychonoff space, it is known that the tightness of $B_1(X)$ is equal to $\sup_{n\in\NN} l\big(X^n_{\aleph_0}\big)$, where $l(Y)$ is the Lindel\"{o}f number of a space $Y$ and $X_{\aleph_0}$ is the $\aleph_0$-modification of $X$, see \cite{Pestryakov}. Lorch noted in \cite{Lorch} that $X_{\aleph_0}$ coincides with the minimal topology on $X$ generated by all functions of the first Baire class, which shows that usually the space $X_{\aleph_0}$ is sufficiently complicated. 
In \cite{Pytkeev-B1}, Pytkeev  showed that if $X$ is a \v{C}ech-complete Lindel\"{o}f space, then $B_1(X)$ is a $q$-space if and only if $X$ is perfectly normal. 

The aforementioned results motivate a more detailed study of topological properties of the spaces $B_\alpha(X,Y)$.
In this paper we concentrate mostly on the case when $Y=G$ is an abelian non-precompact metrizable group and $X$ is a $G$-Tychonoff first countable space. We examine the spaces $B_\alpha(X,G)$ in that case and  having one of the  topological properties described in the diagram below (which also shows relationships between the considered properties):
\[
\xymatrix{
{\substack{\mbox{countable} \\ \mbox{$cs^\ast$-character}}} & {\mbox{$\sigma$-space}} & {\substack{\mbox{$\kappa$-Fr\'{e}chet--} \\ \mbox{Urysohn}}} \ar@{=>}[rr] & & {\mbox{Ascoli}} \\
{\substack{\mbox{separable} \\ \mbox{metric}}}\ar@{=>}[r]  \ar@{=>}[d]  & {\mbox{metric}} \ar@{=>}[r] \ar@{=>}[u] \ar@{=>}[d] \ar@{=>}[lu] &   {\substack{\mbox{Fr\'{e}chet--} \\ \mbox{Urysohn}}} \ar@{=>}[r] \ar@{=>}[u] & {\mbox{sequential}} \ar@{=>}[r] \ar@{=>}[d] &  {\mbox{$k$-space}} \ar@{=>}[u] \\
{\mbox{Lindel\"{o}f}} \ar@{=>}[r]  & {\mbox{paracompact}} \ar@{=>}[r] & {\mbox{normal}} & {\substack{\mbox{countably} \\ \mbox{tight}}} &
}
\] %
Note that the implication ``$\kappa$-Fr\'{e}chet--Urysohn$\Rightarrow$Ascoli'' is proved in Theorem \ref{t:k-FU-seq-Ascoli} below,  other implications in the diagram are well known (all relevant definitions are given below in Sections \ref{sec:1} and \ref{sec:2}). 

Our main result is the following theorem.
\begin{theorem} \label{t:Baire-class}
Let $G$ be a non-precompact abelian metrizable group, $X$ a $G$-Tychonoff first countable space and let $H$ be a subgroup of $G^X$ containing $B_1(X,G)$. Then:
\begin{enumerate}
\item[{\rm (A)}] $H$ is a $\kappa$-Fr\'{e}chet--Urysohn space and hence is an Ascoli space.
\item[{\rm (B)}]  The following assertions are equivalent:
\begin{enumerate}
\item[{\rm (i)}] $X$ is countable;
\item[{\rm (ii)}] $H$ is a metrizable space and $H=G^X$;
\item[{\rm (iii)}] $H$ has countable tightness;
\item[{\rm (iv)}] $H$ has countable $cs^\ast$-character;
\item[{\rm (v)}] $H$ is a $\sigma$-space;
\item[{\rm (vi)}] $H$ is a $k$-space.
\end{enumerate}
If in addition $B_{2}(X,G)\subseteq H$, then (i)-(vi) are equivalent to
\begin{enumerate}
\item[{\rm (vii)}] $H$ is a normal space.
\end{enumerate}
\item[{\rm (C)}] If $B_{2}(X,G)\subseteq H$, then $H$ is a Lindel\"{o}f space if and only if $X$ is countable and $G$ is separable.
\item[{\rm (D)}]  $H$ is \v{C}ech-complete if and only if $X$ is countable and $G$ is complete.
\end{enumerate}
\end{theorem}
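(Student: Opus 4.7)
For part (A), I would exploit that $H$ is a topological group, hence homogeneous, reducing $\kappa$-Fr\'{e}chet--Urysohn to the case of open $V \subseteq H$ with $0 \in \overline{V}$: produce a sequence in $V$ converging to $0$. Basic $0$-neighborhoods in $H$ have the form $W(F,U) = \{h \in H : h(F) \subseteq U\}$ for finite $F \subseteq X$ and $U$ a neighborhood of $0$ in $G$. The decisive ingredient is that for every $x \in X$ and $g \in G$ the point mass $g\chi_{\{x\}}$ lies in $B_1(X,G) \subseteq H$: first-countability of $X$ gives a decreasing neighborhood base $\{N_n\}$ at $x$, and the $G$-Tychonoff property furnishes continuous separators $f_n$ with $f_n(x) = g$ and $f_n|_{X \setminus N_n} = 0$, whose pointwise limit is $g\chi_{\{x\}}$; consequently $H$ contains every finitely supported $G$-valued function. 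I would construct $h_n \in V$ inductively: fix a decreasing base $\{U_n\}$ at $0$ in $G$ and set $F_n = \bigcup_{k<n}\supp(h_k)$ (finite). Using $0 \in \overline V$ pick $v_n \in V \cap W(F_n, U_n)$, then by openness of $V$ choose a basic neighborhood $v_n + W(F_n', U_n') \subseteq V$ with $F_n' \supseteq F_n$ finite, and take $h_n$ to be the truncation of $v_n$ to $F_n'$ (zero outside $F_n'$), so that $h_n \in H \cap V$ is finitely supported and $h_n|_{F_n} \subseteq U_n$. For each $x \in X$, either $h_n(x)=0$ for all $n$ or $x \in \supp(h_{k_0})$ for some $k_0$ and then $h_n(x) \in U_n$ for $n > k_0$, whence $h_n \to 0$ pointwise. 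The Ascoli conclusion is Theorem \ref{t:k-FU-seq-Ascoli}.

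For part (B), the easy direction (i)$\Rightarrow$(ii) follows because a countable first-countable Tychonoff space is second countable and metrizable, and in that setting every function $X \to G$ is Baire-$1$ via nearest-neighbor interpolation in a compatible metric, whence $H = B_1(X,G) = G^X$, which is a countable product of metrizable groups and thus metrizable. The implications (ii)$\Rightarrow$(iii)--(vii) are read off from the diagram. The converse implications (iii)$\Rightarrow$(i), (iv)$\Rightarrow$(i), (v)$\Rightarrow$(i), (vi)$\Rightarrow$(i), and (under $B_2(X,G) \subseteq H$) (vii)$\Rightarrow$(i) I would prove by contrapositive: assuming $X$ uncountable, non-precompactness of $G$ furnishes $\eps > 0$ and $\{g_n\} \subseteq G$ that is $\eps$-separated in a left-invariant metric, and combining these test values with the point-mass construction from (A) I would embed into $H$ an uncountable family of translates $\{g_{n(x)}\chi_{\{x\}} : x \in Y\}$ (for uncountable $Y \subseteq X$) whose closure behavior at $0$ simultaneously violates countable tightness, countable $cs^\ast$-character, the $\sigma$-space property (via absence of a $\sigma$-discrete network), and the $k$-space property (via a compactly generated non-closed witness). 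For the normality implication, $B_2(X,G) \subseteq H$ supplies characteristic functions of $F_\sigma$-subsets of $X$, and I would separate two disjoint closed subsets of $H$ by a Jones-style cardinality obstruction inside a copy of a known non-normal $\omega_1$-indexed product. This normality step is the main technical obstacle.

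For parts (C) and (D), I would bootstrap from (B). For (C), every Hausdorff topological group is Tychonoff hence regular, so a Lindel\"{o}f $H$ is normal; under $B_2(X,G) \subseteq H$ part (B)(vii) forces $X$ countable and $H = G^X$, and then $G^X$ Lindel\"{o}f metrizable is separable, so $G$ is separable. Conversely, $X$ countable and $G$ separable give $G^X$ separable metric, hence Lindel\"{o}f. For (D), \v{C}ech-complete Hausdorff spaces are $k$-spaces, so by (B)(vi) $X$ is countable and $H = G^X$ is completely metrizable; the constant-function subgroup of $G^X$ is a closed topological copy of $G$, so $G$ inherits a complete metric. Conversely, $X$ countable and $G$ completely metrizable make $G^X$ completely metrizable and hence \v{C}ech-complete.
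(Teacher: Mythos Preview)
Your argument for (A) is correct and is essentially a direct, self-contained version of what the paper does: the paper observes that the finitely supported functions $\sigma(\mathbf{0})$ sit inside $B_1(X,G)\subseteq H$ (Lemma~\ref{l:Baire-1}) and then quotes that $\sigma(\mathbf{0})$ is Fr\'echet--Urysohn together with Corollary~\ref{c:kFU-dense-subgroup}; your inductive truncation reproduces exactly the Fr\'echet--Urysohn property of $\sigma(\mathbf{0})$ in situ. Parts (C) and (D) are also handled just as in the paper.

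The real gap is in (B). Your single witness family $\{g_{n(x)}\chi_{\{x\}}:x\in Y\}$ does the job for (iv)$\Rightarrow$(i) and (v)$\Rightarrow$(i) --- it is precisely the one-point compactification $K=\{\mathbf{0}\}\cup\{\delta_{x,g}\}$ the paper uses in Theorem~\ref{t:Baire-class-tightness} --- but it does \emph{not} kill countable tightness: for any infinite countable subfamily $B$, every basic neighbourhood $W(F,U)$ of $\mathbf{0}$ meets $B$ (just pick $x\notin F$), so $\mathbf{0}\in\overline{B}$ and tightness at $\mathbf{0}$ is countable. The paper instead takes $A=\sigma(\mathbf{0},g)$ and shows that $\mathbf{g}\in\overline{A}$ while $\mathbf{g}\notin\overline{B}$ for every countable $B\subseteq A$; the point is that the accumulation point must be a function that is nonzero \emph{everywhere}, so that a single coordinate outside the countable support of $B$ separates it.

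More seriously, your ``compactly generated non-closed witness'' for (vi)$\Rightarrow$(i) is not supplied, and point masses will not produce one. The paper's construction (Theorem~\ref{t:Baire-class-k-space}) is of a quite different flavour: it uses the uniformly discrete sequence $\{g_n\}$ to build the set $A$ of functions that equal $g_n$ off a set of size at most $n$, so that $\mathbf{0}\in\overline{A}\setminus A$, and then exploits compactness of $K$ coordinatewise to bound, for each $x$, the indices $n$ that can occur in $A\cap K$ at $x$; a pigeonhole on an uncountable fibre then shows $A\cap K$ is closed. Nothing like this is visible in your sketch. Likewise, the normality step in the paper (Theorem~\ref{t:Baire-class-normal}) is not a Jones-type counting argument but an explicit construction of two disjoint closed sets $A=\cl_H(D)$ and $B=\cl_H(E)$ (built from injective $T$-valued perturbations of $\mathbf{0}$ and of $\mathbf{g}_1$) together with an inductive ``trap'' showing any separating open sets must intersect. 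You flag this as the main obstacle, and it is; but so is the $k$-space step, and both require the uniformly discrete sequence in an essential, structured way that your outline does not capture.
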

In Theorem \ref{t:Baire-class}, the assumption on $G$ of being non-precompact is essential for items (vi), (vii) and (C)-(D) (if $X$ is any discrete space and $G$ is compact, then $C(X,G)=G^X$ is a compact group). In Corollary \ref{c:Baire-class}  we consider the properties of being a locally compact, cosmic, analytic, or $K$-analytic space.

The paper is organized as follows. In Section \ref{sec:1} we give a new  characterization of $\kappa$-Fr\'{e}chet--Urysohn spaces  (Theorem \ref{t:Char-kFU}) which  implies $(A)$ of Theorem \ref{t:Baire-class}. In Theorem \ref{t:k-FU-seq-Ascoli}, we prove that every $\kappa$-Fr\'{e}chet--Urysohn space $X$ is Ascoli. Applying Theorem \ref{t:k-FU-seq-Ascoli} and some of the main results from \cite{Banakh-Survey,Gabr-LCS-Ascoli,Gab-LF,Gabr-L(X)-Ascoli,GKP} we characterize the $\kappa$-Fr\'echet--Urysohness in various important classes of locally convex spaces including strict $(LF)$-spaces and free locally convex spaces. In Section \ref{sec:2} we prove Theorem \ref{t:Baire-class} using several more general results. From that results it follows that (B)-(D) hold also for subspaces $H$ of $G^X$ containing $B_1(X,S)$ or $B_2(X,S)$, where $S$ is the closed unit ball of an infinite-dimensional normed space.


\section{The $\kappa$-Fr\'{e}chet--Urysohn property for locally convex spaces } \label{sec:1}


Following Arhangel'skii, a topological space $X$ is said to be {\em $\kappa$-Fr\'{e}chet--Urysohn} if for every open subset $U$ of $X$ and every $x\in \overline{U}$, there exists a sequence $\{ x_n\}_{n\in\NN} \subseteq U$ converging to $x$. Clearly, every Fr\'{e}chet--Urysohn space is $\kappa$-Fr\'{e}chet--Urysohn.  In \cite[Theorem~3.3]{LiL}, Liu and Ludwig showed that a topological space $X$ is $\kappa$-Fr\'{e}chet--Urysohn if and only if $X$ is a $\kappa$-pseudo open image of a metric space.  It is known that there are $\kappa$-Fr\'{e}chet--Urysohn spaces which are not $k$-spaces, and there are sequential spaces which are not $\kappa$-Fr\'{e}chet--Urysohn, see \cite{LiL} or Proposition \ref{p:kFU-phi} below.
In the next theorem we give a new characterization of $\kappa$-Fr\'{e}chet--Urysohn spaces. The closure of a subset $A$ of a topological space $X$ is denoted by $\overline{A}$ or $\cl_X(A)$.

\begin{theorem} \label{t:Char-kFU}
A topological space $X$ is $\kappa$-Fr\'{e}chet--Urysohn if and only if each point $x\in X$ is contained in a dense $\kappa$-Fr\'{e}chet--Urysohn subspace of $X$.
\end{theorem}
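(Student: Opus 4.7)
The forward implication is trivial: if $X$ itself is $\kappa$-Fr\'{e}chet--Urysohn, then $X$ is a dense $\kappa$-Fr\'{e}chet--Urysohn subspace containing each of its own points. So all the work is in the converse, and the plan is to verify the defining condition of $\kappa$-Fr\'{e}chet--Urysohness directly using the witnessing dense subspace.

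Fix an open set $U \subseteq X$ and a point $x \in \overline{U}$; I need to produce a sequence in $U$ converging to $x$. By hypothesis, choose a dense $\kappa$-Fr\'{e}chet--Urysohn subspace $Y$ of $X$ with $x \in Y$. The natural candidate to transfer the problem into $Y$ is the set $U \cap Y$, which is open in $Y$.

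The crucial elementary step is the observation that if $Y$ is dense in $X$ and $U$ is open in $X$, then $U \cap Y$ is dense in $U$ with respect to the topology of $X$. Indeed, any nonempty relatively open subset of $U$ is of the form $U \cap W$ with $W$ open in $X$, and being a nonempty open set in $X$ it must meet the dense set $Y$. Consequently $\overline{U}^{\,X} \subseteq \overline{U \cap Y}^{\,X}$, so $x \in \overline{U \cap Y}^{\,X}$. Intersecting with $Y$ gives $x \in \overline{U \cap Y}^{\,X} \cap Y = \cl_Y(U \cap Y)$.

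Now invoke the $\kappa$-Fr\'{e}chet--Urysohn property of $Y$ applied to the open set $U \cap Y$ of $Y$ and the point $x \in \cl_Y(U \cap Y)$: this yields a sequence $\{x_n\}_{n\in\NN} \subseteq U \cap Y \subseteq U$ converging to $x$ in $Y$, and hence in $X$, which is exactly what was needed. I do not anticipate a real obstacle here; the argument is essentially a one-line reduction, and the only point that requires care is the density of $U \cap Y$ in $U$, which is standard but worth stating explicitly so that the closure operation is correctly transferred from $X$ to the subspace $Y$.
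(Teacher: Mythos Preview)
Your argument is correct and follows essentially the same route as the paper's own proof: both reduce to showing $x\in\cl_Y(U\cap Y)$ and then apply the $\kappa$-Fr\'echet--Urysohn property of $Y$. The only cosmetic difference is that you phrase the key step as ``$U\cap Y$ is dense in $U$, hence $\overline{U}\subseteq\overline{U\cap Y}$,'' whereas the paper verifies $x\in\cl_Y(U\cap Y)$ directly by intersecting an arbitrary $Y$-neighborhood of $x$ with $U\cap Y$.
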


\begin{proof}
The necessity is clear. To prove the sufficiency, fix an open subset $U$ of $X$ and a point $x\in \overline{U}$. 
Let $Y$ be a dense $\kappa$-Fr\'{e}chet--Urysohn subspace of $X$ containing $x$. Then $V:= U\cap Y$ is an open subset of $Y$.
We claim that $x\in \cl_Y(V)$. Indeed, if $W\subseteq Y$ is an open neighborhood of $x$ in $Y$, take an open $W'\subseteq X$ such that $W=W'\cap Y$. As $x\in\overline{U}$, the set $W' \cap U$ is a nonempty open subset of $X$. Since $Y$ is dense in $X$ the set
$
(W'\cap U)\cap Y = (W'\cap Y) \cap (U\cap Y) = W\cap V
$
is not empty. Thus $x\in \cl_Y(V)$ and the claim is proved.
Finally, since $Y$ is $\kappa$-Fr\'{e}chet--Urysohn there is a sequence $\{ y_n\}_{n\in\NN} \subseteq V\subseteq U$ converging to $x$. \qed
\end{proof}

\begin{corollary} \label{c:kFU-dense-subgroup}
Let $Y$ be a dense subset of a homogeneous space (in particular, a topological group) $X$. If $Y$ is $\kappa$-Fr\'{e}chet--Urysohn, then $X$ is also a $\kappa$-Fr\'{e}chet--Urysohn space.
\end{corollary}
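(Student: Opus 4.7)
The plan is to apply Theorem \ref{t:Char-kFU} directly: it suffices to show that every point of $X$ lies in some dense $\kappa$-Fr\'echet--Urysohn subspace of $X$. The natural candidate at a point $x\in X$ is a translate (or, more generally, a homeomorphic image) of the given dense subspace $Y$ that has been moved so as to contain $x$, and homogeneity of $X$ is exactly what lets us do this.

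More concretely, I would argue as follows. The case $X=\emptyset$ is trivial, so assume $X\neq\emptyset$; then since $Y$ is dense in $X$, also $Y\neq\emptyset$, and we can pick some $y_0\in Y$. Given an arbitrary $x\in X$, use homogeneity of $X$ to choose a homeomorphism $h\colon X\to X$ with $h(y_0)=x$. Set $Y_x:=h(Y)$. Then $Y_x$ is dense in $X$ because $h$ is a homeomorphism and $Y$ is dense in $X$; moreover $Y_x$ is $\kappa$-Fr\'echet--Urysohn because it is homeomorphic to $Y$ (this property is obviously preserved by homeomorphisms), and $x=h(y_0)\in Y_x$. Thus $Y_x$ is a dense $\kappa$-Fr\'echet--Urysohn subspace of $X$ containing $x$, and Theorem \ref{t:Char-kFU} delivers the conclusion.

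There is no real obstacle here: the whole point of Theorem \ref{t:Char-kFU} is to reduce the global $\kappa$-Fr\'echet--Urysohn property to a local one witnessed by dense subspaces, and homogeneity is precisely what is needed to transport the single witness $Y$ to a witness through every point. The parenthetical case of topological groups is a special instance because left translation is a self-homeomorphism, so the same argument works with $h$ taken to be the translation by $xy_0^{-1}$ (or $x-y_0$ in additive notation).
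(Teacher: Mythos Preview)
Your proof is correct and follows essentially the same approach as the paper: fix $y_0\in Y$, for each $x\in X$ use homogeneity to find a self-homeomorphism $h$ with $h(y_0)=x$, observe that $h(Y)$ is a dense $\kappa$-Fr\'echet--Urysohn subspace of $X$ containing $x$, and invoke Theorem~\ref{t:Char-kFU}. The only differences are cosmetic: you handle the trivial empty case and spell out why $h(Y)$ is dense and $\kappa$-Fr\'echet--Urysohn, and you make the translation map explicit in the group case.
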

\begin{proof}
Fix an arbitrary $y_0\in Y$. Let $x\in X$. Take a homeomorphism $h$ of $X$ such that $h(y_0)=x$. Then $x\in h(Y)$ and $h(Y)$ is a $\kappa$-Fr\'{e}chet--Urysohn space. Therefore, each element of $X$ is contained in a dense $\kappa$-Fr\'{e}chet--Urysohn subspace of $X$ and Theorem \ref{t:Char-kFU} applies. \qed
\end{proof}


In \cite[Theorem~4.1]{LiL}, Liu and Ludwig proved that the product of a family of bi-sequential spaces is $\kappa$-Fr\'{e}chet--Urysohn. Note that any countable product of bi-sequential spaces is bi-sequential, see \cite[Proposition~3.D.3]{Michael-quotient}. On the other hand, countable products of $W$-spaces are $W$-spaces (\cite[Theorem~4.1]{Gruenhage-games}) and there are $W$-spaces which are not bi-sequential (\cite[Example~5.1]{Gruenhage-games}). Taking into account that bi-sequential spaces and $W$-spaces are Fr\'{e}chet--Urysohn spaces, the next corollary essentially generalizes Theorem 4.1 of \cite{LiL}.

\begin{corollary} \label{c:kFU-product}
Let $\{X_i:i\in I\}$ be a family of topological spaces such that $\prod_{i\in I'}X_i$ is Fr\'echet--Urysohn for any countable subset $I'$ of $I$. Then the space $X=\prod_{i\in I}X_i$ is $\kappa$-Fr\'{e}chet--Urysohn.
\end{corollary}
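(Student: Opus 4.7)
The plan is to apply Theorem \ref{t:Char-kFU}: to conclude that $X = \prod_{i \in I} X_i$ is $\kappa$-Fr\'echet--Urysohn, it is enough to produce, for each $y \in X$, a dense $\kappa$-Fr\'echet--Urysohn subspace of $X$ containing $y$. For this I would use the $\Sigma$-product
\[
\Sigma_y := \{z \in X : |\{i \in I : z_i \neq y_i\}| \leq \aleph_0\},
\]
which is dense in $X$, contains $y$, and decomposes as $\Sigma_y = \bigcup_J \sigma_J$, where $\sigma_J := \{z \in X : z_i = y_i \text{ for all } i \notin J\}$ is indexed by the countable subsets $J \subseteq I$. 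Each $\sigma_J$ is canonically homeomorphic to the countable product $\prod_{i \in J} X_i$, which is Fr\'echet--Urysohn by hypothesis. The argument thus reduces to showing that $\Sigma_y$ itself is $\kappa$-Fr\'echet--Urysohn.

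So fix an open $V \subseteq \Sigma_y$ and a point $w \in \cl_{\Sigma_y}(V)$, and write $V = U \cap \Sigma_y$ for some open $U \subseteq X$. Density of $\Sigma_y$ in $X$ gives $w \in \cl_X(U)$, and $J_w := \{i : w_i \neq y_i\}$ is countable. The key technical step is to produce a countable $J \supseteq J_w$ with $w \in \cl_X(U \cap \sigma_J)$. Once this is done, $w$ lies in $\sigma_J$ and in $\cl_{\sigma_J}(U \cap \sigma_J)$, so the Fr\'echet--Urysohn property of $\sigma_J$ yields a sequence in $U \cap \sigma_J \subseteq V$ converging to $w$ in $\sigma_J$; this convergence transfers at once to $X$, and hence to $\Sigma_y$.

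To construct such a $J$, the engine is a finite-support modification: since $U$ is open, any basic open neighborhood $N$ of $w$ in $X$ meeting $U$ contains a basic open $B \subseteq U \cap N$ of some finite support $F$, and resetting the coordinates of any point of $B$ to agree with $y$ outside $F$ produces a point of $U \cap N$ whose $y$-support is contained in $F$, hence finite. Collecting such witnesses across all basic neighborhoods $N$ of $w$ yields a set $A \subseteq U$ of finitely-$y$-supported points with $w \in \cl_X(A)$. Since $A \subseteq \Sigma_y$ and $\Sigma_y$ has countable tightness (a standard consequence of the fact that every countable subproduct is Fr\'echet--Urysohn, hence countably tight), one can extract a countable $A_0 \subseteq A$ with $w \in \cl_X(A_0)$; then $J := J_w \cup \bigcup_{z \in A_0} \{i : z_i \neq y_i\}$ is a countable set with $A_0 \subseteq U \cap \sigma_J$, so $w \in \cl_X(A_0) \subseteq \cl_X(U \cap \sigma_J)$, as required.

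The main obstacle I anticipate is justifying the countable-tightness extraction step for $\Sigma_y$ under the stated hypothesis; this rests on classical $\Sigma$-product machinery in the spirit of Corson and Noble. An alternative route is a direct transfinite construction of $J$, starting from $J_0 = J_w$ and enlarging at each successor stage by the finite blocks of coordinates witnessing new finite-support points of $U$, with unions taken at limits, and then showing that the recursion stabilizes at a countable ordinal by exploiting the Fr\'echet--Urysohn property of the partial subproducts $\sigma_{J_\alpha}$. This avoids invoking the countable tightness of $\Sigma_y$ as a black box, at the cost of a more elaborate bookkeeping of which finite blocks must be added.
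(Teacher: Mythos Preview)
Your argument is correct, but it takes a longer route than the paper. The paper applies Theorem~\ref{t:Char-kFU} with the \emph{small} $\sigma$-product
\[
\sigma(z) := \{x \in X : \{i : x_i \neq z_i\} \text{ is finite}\}
\]
in place of your $\Sigma_y$. This $\sigma(z)$ is dense, contains $z$, and is already Fr\'echet--Urysohn under the hypothesis (the paper cites Proposition~2.6 of \cite{GGKZ} for this), so Theorem~\ref{t:Char-kFU} applies immediately---the whole proof is three lines.

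Your approach via the large $\Sigma$-product works, but notice that your own ``finite-support modification'' step already lands the witness set $A$ inside $\sigma(y)$; at that point you are one step away from the paper's argument, yet instead of exploiting the Fr\'echet--Urysohn property of $\sigma(y)$ directly you invoke the countable tightness of $\Sigma_y$ (a genuine extra result needing the Corson--Noble style argument you sketch) to pass to a countable $A_0$ and then to a countable $\sigma_J$. What your route buys is the slightly stronger conclusion that $\Sigma_y$ itself is $\kappa$-Fr\'echet--Urysohn, but for the corollary this is not needed, and the cost is an appeal to $\Sigma$-product machinery that the paper avoids entirely.
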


\begin{proof}
For every $z=(z_i)\in X$, set
\[
\sigma(z) := \big\{ x=(x_i)\in X: \{ i: x_i\not= z_i\} \mbox{ is finite}\big\}.
\]
Clearly, $\sigma(z)$ is a dense subspace of $X$. Proposition 2.6 of \cite{GGKZ} states 
 that $\sigma(z)$ is Fr\'echet--Urysohn. Thus, by  Theorem \ref{t:Char-kFU}, $X$ is $\kappa$-Fr\'{e}chet--Urysohn. \qed
\end{proof}

\begin{corollary} \label{c:kFU-metrizable-group}
Let $G$ be a nontrivial metrizable group with the identity $e$, $\kappa$ be a cardinal and let $H$ be a subgroup of the product $G^\kappa$ containing
\[
\bigoplus_\kappa G :=\big\{ f\in G^\kappa: \mathrm{supp}(f):= \{ i\in \kappa: f(i)\not= e\} \mbox{ is finite}\big\} .
\]
Then $H$ is a $\kappa$-Fr\'{e}chet--Urysohn space.
\end{corollary}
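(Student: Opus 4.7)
The plan is to derive the corollary directly from Corollary~\ref{c:kFU-dense-subgroup} by exhibiting a dense, $\kappa$-Fr\'{e}chet--Urysohn subgroup of $H$. Since $\bigoplus_\kappa G \subseteq H$ by hypothesis, the natural candidate is $\bigoplus_\kappa G$ itself, so there are only two things to verify: that $\bigoplus_\kappa G$ is $\kappa$-Fr\'{e}chet--Urysohn, and that it is dense in $H$.

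For the first point I would observe that, taking the identity $e$ of $G^\kappa$ as the base point, the subgroup $\bigoplus_\kappa G$ coincides with the $\sigma$-product $\sigma(e)$ introduced in the proof of Corollary~\ref{c:kFU-product}. Since $G$ is metrizable, every countable subproduct $G^{I'}$ is metrizable and hence Fr\'{e}chet--Urysohn, so Proposition~2.6 of \cite{GGKZ} applies and yields that $\sigma(e) = \bigoplus_\kappa G$ is Fr\'{e}chet--Urysohn; in particular it is $\kappa$-Fr\'{e}chet--Urysohn.

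For the density, given $h\in H$ and a basic open neighborhood of $h$ in $G^\kappa$ restricting finitely many coordinates $i_1,\ldots,i_n$, the function $f$ defined by $f(i_k)=h(i_k)$ for $k=1,\ldots,n$ and $f(j)=e$ otherwise lies in $\bigoplus_\kappa G$ and belongs to the given neighborhood. Hence $\bigoplus_\kappa G$ is dense in $G^\kappa$ and therefore also dense in $H$ with its subspace topology. Since $H$ inherits a group operation from $G^\kappa$, it is a topological group and in particular a homogeneous space, so Corollary~\ref{c:kFU-dense-subgroup} applies and gives the conclusion.

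No step presents a genuine obstacle: all the heavy lifting has been done in the two previous corollaries, and the only mildly subtle point is verifying density in $H$ rather than merely in the ambient product $G^\kappa$, which is automatic since a subset dense in a larger space remains dense in any intermediate subspace containing it.
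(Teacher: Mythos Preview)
Your proof is correct and follows essentially the same approach as the paper: invoke Proposition~2.6 of \cite{GGKZ} to see that $\bigoplus_\kappa G$ is Fr\'{e}chet--Urysohn, note that it is dense in $G^\kappa$ and hence in $H$, and apply Corollary~\ref{c:kFU-dense-subgroup}. The paper's version is simply more terse, omitting the explicit verification of density and the identification with $\sigma(e)$ that you spell out.
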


\begin{proof}
 Proposition 2.6 of \cite{GGKZ} implies 
that $\bigoplus_\kappa G$ is Fr\'echet--Urysohn. Clearly, $\bigoplus_\kappa G$ is dense in $G^\kappa$ and hence in $H$. Thus, by Corollary \ref{c:kFU-dense-subgroup}, $H$ is $\kappa$-Fr\'{e}chet--Urysohn. \qed
\end{proof}

Let $X$ be a Tychonoff  space. Denote by $\CC(X)$ and $C_p(X)$ the space $C(X)$ of all real-valued continuous functions on $X$ endowed with the compact-open topology and the pointwise topology, respectively.
Following \cite{BG}, $X$ is called an {\em Ascoli space} if every compact subset $\KK$ of $\CC(X)$ is evenly continuous (i.e., if the map $(f,x)\mapsto f(x)$ is continuous as a map from $\KK\times X$ to $\IR$). In \cite{Gabr-LCS-Ascoli}  we noticed that $X$ is Ascoli if and only if every compact subset of $\CC(X)$ is equicontinuous.  The classical Ascoli theorem \cite[Theorem~3.4.20]{Eng} states that every $k$-space is Ascoli. 
Now we prove the following somewhat unexpected result.

\begin{theorem} \label{t:k-FU-seq-Ascoli}
Each $\kappa$-Fr\'{e}chet--Urysohn Tychonoff space $X$ is Ascoli.
\end{theorem}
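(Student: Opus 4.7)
The plan is to argue by contradiction, working directly from the definition of $\kappa$-Fr\'echet--Urysohn: assume that $\mathcal{K}\subseteq C_k(X)$ is compact but fails to be equicontinuous at some point $x\in X$, and derive a violation of the $\kappa$-FU property at $x$. So fix $\varepsilon>0$ such that for every open neighborhood $V$ of $x$ there exist $f\in\mathcal{K}$ and $y\in V$ with $|f(y)-f(x)|\geq\varepsilon$.

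The key idea is to package this failure into a single open set with $x$ in its closure. I would set
\[
W:=\bigcup_{f\in\mathcal{K}}\bigl\{\,y\in X:\ |f(y)-f(x)|>\varepsilon/2\,\bigr\}.
\]
Continuity of each $f\in\mathcal{K}$ makes $W$ open, and the assumed failure of equicontinuity at $x$ places $x$ in $\overline{W}$. Applying the $\kappa$-Fr\'echet--Urysohn property to $W$ and $x$ yields a sequence $(y_n)\subseteq W$ with $y_n\to x$, and for each $n$ I pick $f_n\in\mathcal{K}$ witnessing membership, so that $|f_n(y_n)-f_n(x)|>\varepsilon/2$.

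Next I exploit compactness of $\mathcal{K}$ in $C_k(X)$. The set $K:=\{y_n:n\in\NN\}\cup\{x\}$ is compact in $X$ because $y_n\to x$, so convergence in $C_k(X)$ restricts to uniform convergence on $K$. Compactness of $\mathcal{K}$ provides a cluster point $f\in\mathcal{K}$ of the sequence $(f_n)$, and hence a subnet $(f_{n_\alpha})$ that converges to $f$ in $C_k(X)$, i.e.\ uniformly on $K$. Since $f$ is continuous at $x$ and $y_{n_\alpha}\to x$ in $X$, we have $f(y_{n_\alpha})\to f(x)$, and uniform convergence on $K$ gives $|f_{n_\alpha}(y_{n_\alpha})-f(y_{n_\alpha})|\to 0$ and $|f_{n_\alpha}(x)-f(x)|\to 0$. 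A triangle inequality then forces $|f_{n_\alpha}(y_{n_\alpha})-f_{n_\alpha}(x)|\to 0$, contradicting the lower bound $\varepsilon/2$. So $\mathcal{K}$ must be equicontinuous at every $x\in X$, which is the desired Ascoli property.

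The step I expect to require the most care is the subnet argument in the last paragraph: the sequence $(f_n)$ need not converge in $C_k(X)$, only cluster, so one must pass to a subnet and transport uniform convergence on the fixed compact set $K$ to the "diagonal" evaluations at the moving points $y_{n_\alpha}$. Everything else---openness of $W$, membership of $x$ in $\overline{W}$, and the extraction of a convergent sequence to $x$ inside $W$---is a direct application of continuity and of the $\kappa$-FU hypothesis.
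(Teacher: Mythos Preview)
Your proof is correct and follows the same overall strategy as the paper: build an open set $W$ witnessing the failure of equicontinuity, use the $\kappa$-Fr\'echet--Urysohn property to extract a sequence $y_n\to x$ inside $W$ together with witnesses $f_n\in\mathcal K$ satisfying $|f_n(y_n)-f_n(x)|>\varepsilon/2$, and then exploit compactness of $\mathcal K$ on the compact set $K=\{y_n\}\cup\{x\}$ to reach a contradiction.

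The only noteworthy difference is in the endgame. You pass to a convergent subnet $(f_{n_\alpha})\to f$ in $C_k(X)$ and run a triangle-inequality argument at the moving points $y_{n_\alpha}$. The paper instead restricts to $C_k(S)$ for $S=\{y_n\}\cup\{x\}$, observes that the image of $\mathcal K$ there is compact, and applies the classical Ascoli theorem on the compact space $S$ to get equicontinuity of the whole family $\{f_n|_S\}$ at $x$; the contradiction then drops out for a single large index $n=N$ with no subnets needed. The paper's route is a bit cleaner (no nets), while yours is more self-contained (it does not invoke Ascoli as a black box). Both are perfectly valid.
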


\begin{proof}
Suppose for a contradiction that $X$ is not an Ascoli space. Then there exists a compact set $K$ in $\CC(X)$ which is not equicontinuous at some point $z\in X$. Therefore there is $\e_0 >0$ such that for every open neighborhood $U$ of $z$ there exists a function $f_U\in K$ for which the open set $W_{f_U} :=\{ x\in U: | f_U (x)- f_U(z)|> \e_0\}$ is not empty (note that $z\not\in W_{f_U} \subseteq U$). Set
\[
W:= \bigcup\{ W_{f_U}: U \mbox{ is an open neighborhood of } z\}.
\]
Then $W$ is an open subset of $X$ such that $z\in\overline{W}\setminus W$. As $X$ is $\kappa$-Fr\'{e}chet--Urysohn, there is a sequence $\{ x_n :n\in\NN\} \subseteq W$ converging to $z$. For every $n\in\NN$, choose an open neighborhood $U_n$ of $z$ such that $x_n\in W_{f_{U_n}}$ and, therefore,
\begin{equation} \label{equ:kFU-Ascoli-1}
| f_{U_n} (x_n)- f_{U_n}(z)|> \e_0 \quad (\mbox{for all } n\in\NN).
\end{equation}
Set $S:= \{ x_{n}: n\in\NN\} \cup \{ z\}$. Then $S$ is a compact subset of $X$. Denote by $p$ the restriction map $p: \CC(X)\to \CC(S), p(f)=f|_S$. Then $p(K)$ is a compact subset of the Banach space $\CC(S)$. Applying the Ascoli theorem to the compact space $S$ we obtain that the sequence $\{ p(f_{U_n})\}_{n\in\NN}\subseteq p(K)$ is equicontinuous at $z\in S$ and, therefore, there is an $N\in\NN$ such that
\[
\big| f_{U_n} (x_{i}) - f_{U_n}(z) \big| <\frac{\e_0}{2} \; \; \mbox{ for all } \, i\geq N \, \mbox{ and } \, n\in\NN.
\]
In particular, for $i=n=N$ we obtain
$
\big| f_{U_N} (x_{N}) - f_{U_N}(z) \big| <\frac{\e_0}{2}.
$
But this contradicts (\ref{equ:kFU-Ascoli-1}). Thus $X$ is an Ascoli space. \qed
\end{proof}

In the rest of this section, using Theorem \ref{t:k-FU-seq-Ascoli} and some of the main results from \cite{Banakh-Survey,Gabr-LCS-Ascoli,Gab-LF,Gabr-L(X)-Ascoli,GKP},  we characterize the $\kappa$-Fr\'echet--Urysohness in various important classes of locally convex spaces.

In \cite[Theorem 2.1]{Sak2}, Sakai showed that $C_p(X)$ is $\kappa$-Fr\'{e}chet--Urysohn if and only if $X$ has the property $(\kappa)$. 
In \cite{GGKZ} we proved that if $C_p(X)$ is Ascoli,  then it is $\kappa$-Fr\'echet--Urysohn. These results and Theorem \ref{t:k-FU-seq-Ascoli} immediately imply
\begin{corollary} \label{t:Cp-seq-Ascoli}
Let $X$ be a Tychonoff space. Then $C_p(X)$ is Ascoli if and only if $X$ has the property $(\kappa)$.
\end{corollary}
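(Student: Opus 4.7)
The plan is to chain together the three results the paper has just assembled and obtain the equivalence as an immediate consequence; no further topological work should be needed.

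First I would unpack the forward direction. Assume $C_p(X)$ is Ascoli. The result from \cite{GGKZ} cited just above the corollary states that for Tychonoff $X$, Ascoliness of $C_p(X)$ already forces $C_p(X)$ to be $\kappa$-Fr\'{e}chet--Urysohn. Then Sakai's Theorem 2.1 in \cite{Sak2} translates the $\kappa$-Fr\'{e}chet--Urysohn property of $C_p(X)$ into the property $(\kappa)$ of $X$, giving the ``only if'' direction.

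For the reverse direction I would start from the assumption that $X$ has property $(\kappa)$. Applying Sakai's theorem in the opposite direction yields that $C_p(X)$ is $\kappa$-Fr\'{e}chet--Urysohn. Since $C_p(X)$ is a Tychonoff space, Theorem \ref{t:k-FU-seq-Ascoli} (proved just above in the section) then upgrades this to the Ascoli property, giving ``if''.

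There is essentially no obstacle here: the substantive content sits in Theorem \ref{t:k-FU-seq-Ascoli}, in Sakai's characterization, and in the implication from \cite{GGKZ}. The only thing to be careful about is that Sakai's theorem is stated as an equivalence (so it can be invoked in both directions) and that Theorem \ref{t:k-FU-seq-Ascoli} applies to \emph{any} $\kappa$-Fr\'{e}chet--Urysohn Tychonoff space, which $C_p(X)$ certainly is. Hence the chain
\[
C_p(X) \text{ Ascoli} \;\Longleftrightarrow\; C_p(X) \text{ is } \kappa\text{-Fr\'{e}chet--Urysohn} \;\Longleftrightarrow\; X \text{ has property } (\kappa)
\]
closes, which is the statement of Corollary \ref{t:Cp-seq-Ascoli}.
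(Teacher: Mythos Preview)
Your argument is correct and matches the paper's own justification exactly: the corollary is obtained immediately from Sakai's equivalence in \cite{Sak2}, the implication Ascoli $\Rightarrow$ $\kappa$-Fr\'{e}chet--Urysohn for $C_p(X)$ from \cite{GGKZ}, and Theorem~\ref{t:k-FU-seq-Ascoli}. There is nothing to add.
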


The following corollary strengthens Theorem 1.3 of \cite{GGKZ}.
\begin{corollary} \label{t:Cech-complete-Cp-Ascoli}
Let $X$ be a \v{C}ech-complete space. Then $C_p(X)$ is Ascoli if and only if $X$ is scattered.
\end{corollary}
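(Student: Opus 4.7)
The plan is to derive this corollary almost immediately from the preceding material together with a known characterization from \cite{GGKZ}. The forward direction is the substantive one, and the reverse direction relies on the Fr\'echet--Urysohn property of $C_p(X)$ for scattered \v{C}ech-complete $X$.

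First, I would invoke Corollary \ref{t:Cp-seq-Ascoli} to reduce the statement ``$C_p(X)$ is Ascoli'' to ``$X$ has the property $(\kappa)$''. By Sakai's theorem (\cite[Theorem~2.1]{Sak2}), property $(\kappa)$ is in turn equivalent to $C_p(X)$ being $\kappa$-Fr\'echet--Urysohn. Thus the corollary is logically equivalent to the assertion that, for a \v{C}ech-complete space $X$, the space $C_p(X)$ is $\kappa$-Fr\'echet--Urysohn if and only if $X$ is scattered.

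Second, for the ``if'' direction, if $X$ is \v{C}ech-complete and scattered, then $C_p(X)$ is in fact Fr\'echet--Urysohn by standard results (a scattered \v{C}ech-complete space is a topological sum/locally compact-scattered variant for which $C_p$ is well understood), and Fr\'echet--Urysohn implies $\kappa$-Fr\'echet--Urysohn, hence Ascoli by Theorem \ref{t:k-FU-seq-Ascoli}. Conversely, for the ``only if'' direction, one applies Theorem 1.3 of \cite{GGKZ}, which states exactly that for \v{C}ech-complete $X$, the $\kappa$-Fr\'echet--Urysohn property of $C_p(X)$ forces $X$ to be scattered; combining with the equivalence of the first step finishes the proof.

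The point of the corollary, and the reason it strengthens Theorem~1.3 of \cite{GGKZ}, is that the a priori much weaker Ascoli property of $C_p(X)$ already suffices to force $X$ to be scattered when $X$ is \v{C}ech-complete. There is no real obstacle in the proof itself: the only nontrivial input beyond the results already quoted in the excerpt is Theorem~1.3 of \cite{GGKZ}, and the Sakai characterization plus the newly established Corollary \ref{t:Cp-seq-Ascoli} do all of the remaining work.
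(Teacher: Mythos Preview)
Your forward direction (Ascoli $\Rightarrow$ scattered) is correct and matches the paper's use of Theorem~1.3 of \cite{GGKZ}.

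The reverse direction, however, has a genuine gap. You assert that for a scattered \v{C}ech-complete space $X$ the space $C_p(X)$ is Fr\'echet--Urysohn ``by standard results''. This is false. Take $X$ to be an uncountable discrete space: it is completely metrizable, hence \v{C}ech-complete, and trivially scattered; yet $C_p(X)=\IR^X$ is not Fr\'echet--Urysohn (indeed not even countably tight --- the constant function $\mathbf{1}$ lies in the closure of $\sigma(\mathbf{0})$ but is not the limit of any sequence from it). The result you may be thinking of is for \emph{compact} scattered spaces, where $C_p(X)$ is Fr\'echet--Urysohn; this does not extend to the \v{C}ech-complete case.

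The paper's argument for this direction avoids the issue entirely: it invokes Corollary~3.8 of \cite{Sak2}, which states that every \v{C}ech-complete scattered space has property~$(\kappa)$, and then applies Corollary~\ref{t:Cp-seq-Ascoli} to conclude that $C_p(X)$ is Ascoli. So the correct route goes through property~$(\kappa)$ (equivalently, the $\kappa$-Fr\'echet--Urysohn property of $C_p(X)$), not through the stronger Fr\'echet--Urysohn property.
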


\begin{proof}
If $C_p(X)$ is Ascoli, then $X$ is scattered by Theorem 1.3 of \cite{GGKZ}. Conversely, if $X$ is scattered, then, by Corollary 3.8 of \cite{Sak2}, $X$ has the property $(\kappa)$. Thus, by Corollary \ref{t:Cp-seq-Ascoli}, $C_p(X)$ is Ascoli.\qed
\end{proof}

Let $E$ be a locally convex space over a field $\mathbf{F}$, where $\mathbf{F}=\IR$ or $\mathbb{C}$, and let $E'$ be the dual space of $E$. If $E$ is a Banach space, denote by $B$ the closed unit ball of $E$ and set
$
B_w:=\big(B,\sigma(E,E')|_B\big), 
$
where $\sigma(E,E')$ is the weak topology on $E$.

\begin{corollary} \label{c:Banach-kappa-FU}
{\rm (i)} If $E$ is a Banach space, then $B_w$ is $\kappa$-Fr\'{e}chet--Urysohn if and only if $E$ does not contain an isomorphic copy of $\ell_1$.

{\rm (ii)} A Fr\'{e}chet space $E$ over $\mathbf{F}$ is $\kappa$-Fr\'{e}chet--Urysohn in the weak topology if and only if $E=\mathbf{F}^N$ for some $N\leq\w$.

{\rm (iii)} If $X$ is a $\mu$-space and a $k_\IR$-space, then $\CC(X)$ is $\kappa$-Fr\'{e}chet--Urysohn in the weak topology if and only if $X$ is discrete.

{\rm (iv)} The weak${}^\ast$ dual space of a metrizable barrelled space $E$ is $\kappa$-Fr\'{e}chet--Urysohn if and only if $E$ is finite-dimensional.
\end{corollary}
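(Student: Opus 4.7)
The plan is uniform across (i)--(iv): Theorem \ref{t:k-FU-seq-Ascoli} reduces the $\kappa$-Fr\'{e}chet--Urysohn hypothesis to Ascoli, and for each item an Ascoli characterization already available in the cited literature delivers the stated concrete condition. For the converses I would exhibit directly a stronger topological property (metrizability, a dense Fr\'{e}chet--Urysohn subspace, or a product of such) that forces $\kappa$-FU.

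For the necessity in (i)--(iv) I would quote the following Ascoli characterizations: in (i) that $B_w$ is Ascoli iff $\ell_1 \not\hookrightarrow E$, from \cite{GKP}; in (ii) that a Fr\'{e}chet space is Ascoli in its weak topology iff it is $\mathbf{F}^N$ with $N\le\w$, from \cite{Gabr-LCS-Ascoli}; in (iii) the characterization of Ascoli $\CC(X)_w$ for $X$ a $\mu$-space and $k_\IR$-space as ``$X$ discrete'', from \cite{Gabr-LCS-Ascoli,Banakh-Survey}; in (iv) that the weak${}^\ast$ dual of a metrizable barrelled $E$ is Ascoli iff $E$ is finite-dimensional, from \cite{Gabr-LCS-Ascoli,Gabr-L(X)-Ascoli}. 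Combined with Theorem \ref{t:k-FU-seq-Ascoli}, each of these yields the ``only if'' direction of the corresponding item. For the converses in (ii) and (iv), the listed condition forces the ambient space to be either finite-dimensional (hence metrizable) or $\mathbf{F}^\w$, both Fr\'{e}chet--Urysohn and hence $\kappa$-FU. In (iii), $X$ discrete gives $\CC(X)=\mathbf{F}^X$ with weak topology equal to the Tychonoff product topology (the continuous dual being $\bigoplus_X\mathbf{F}$), and Corollary \ref{c:kFU-metrizable-group} applied to the additive group $(\mathbf{F},+)$ shows this product is $\kappa$-FU.

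The main obstacle is the converse of (i). Rosenthal's $\ell_1$-theorem together with the Bourgain--Fremlin--Talagrand theorem gives angelicity of $B_w$, but angelicity alone does not imply $\kappa$-FU. I would invoke Theorem \ref{t:Char-kFU}: it suffices to show that every $x\in B$ lies in a dense subspace of $B_w$ that is $\kappa$-FU in the induced topology. Using a separable-subspace approximation around $x$ together with Rosenthal's structural fact that every bounded sequence in $E$ has a weakly Cauchy subsequence, I would produce a dense Fr\'{e}chet--Urysohn subset of $B_w$ through $x$. The real difficulty, and the place where the bulk of the work will live, is arranging that this holds around every base point $x\in B$ and not only around $0$---since $B$ is not a homogeneous space, the translation argument used in Corollary \ref{c:kFU-dense-subgroup} is not directly available, and one must argue by hand using the $\ell_1$-free structure that the convex-hull closures of such separable slices cover $B_w$ densely at each point.
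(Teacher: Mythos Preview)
Your treatment of (ii)--(iv) matches the paper's: Theorem~\ref{t:k-FU-seq-Ascoli} handles the necessity, and for sufficiency the listed condition makes the space metrizable (finite-dimensional or $\mathbf{F}^\omega$) or a product $\mathbf{F}^X$, whence $\kappa$-Fr\'{e}chet--Urysohn via Corollary~\ref{c:kFU-product} or~\ref{c:kFU-metrizable-group}. This is exactly what the paper does.

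For (i), however, you have manufactured an obstacle that is not there. The result you quote from \cite{GKP} (Theorem~1.9; equivalently Theorem~6.1.1 and Corollary~1.7 of \cite{Banakh-Survey}) does not merely say ``$B_w$ Ascoli $\Leftrightarrow$ $\ell_1\not\hookrightarrow E$''; it gives the full chain
\[
B_w \text{ Ascoli} \;\Longleftrightarrow\; B_w \text{ Fr\'{e}chet--Urysohn} \;\Longleftrightarrow\; \ell_1\not\hookrightarrow E.
\]
Hence the converse of (i) is immediate: if $E$ contains no copy of $\ell_1$, then $B_w$ is already Fr\'{e}chet--Urysohn, so in particular $\kappa$-Fr\'{e}chet--Urysohn. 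The paper's proof of (i) is one line for this reason. Your proposed detour through Theorem~\ref{t:Char-kFU} and separable-subspace approximation is unnecessary, and---as you yourself acknowledge---incomplete: the non-homogeneity of $B_w$ genuinely blocks the translation trick, and you have not explained how to produce, for an \emph{arbitrary} $x\in B$, a dense $\kappa$-Fr\'{e}chet--Urysohn subspace through $x$. Just cite the stronger form of the \cite{GKP} result and the difficulty evaporates.
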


\begin{proof}
(i) Theorem 1.9 of \cite{GKP} or Theorem 6.1.1 and Corollary 1.7 of \cite{Banakh-Survey} state that $B_w$ is Ascoli if and only if $B_w$ is Fr\'{e}chet--Urysohn if and only if $E$ does not contain an isomorphic copy of $\ell_1$. Now Theorem \ref{t:k-FU-seq-Ascoli} applies.

(ii) Corollary 1.7 of \cite{Gabr-LCS-Ascoli} states that $E$ is Ascoli in the weak topology if and only if $E=\mathbf{F}^N$ for some $N\leq\w$. This result and Theorem \ref{t:k-FU-seq-Ascoli} imply the desired.

(iii) Corollary 1.9 of \cite{Gabr-LCS-Ascoli} states that $\CC(X)$ is Ascoli in the weak topology if and only if $X$ is discrete. Now the assertion follows from Theorem \ref{t:k-FU-seq-Ascoli} and Corollary \ref{c:kFU-product}.

(iv) Corollary 1.14 of \cite{Gabr-LCS-Ascoli} states that the weak${}^\ast$ dual space of $E$ is Ascoli if and only if $E$ is finite-dimensional, and Theorem \ref{t:k-FU-seq-Ascoli} applies. \qed
\end{proof}

Now we consider direct locally convex sums of locally convex spaces. The simplest infinite direct sum of lcs is the space $\phi$, the direct locally convex sum $\bigoplus_{n\in\NN} E_n$ with $E_n=\mathbf{F}$ for all $n\in\NN$. It is well known that $\phi$ is a sequential non-Fr\'{e}chet--Urysohn space, see Example 1 of \cite{nyikos}. Below we strengthen the negative part of this assertion.

\begin{proposition}  \label{p:kFU-phi}
$\phi$ is a sequential non-$\kappa$-Fr\'{e}chet--Urysohn space.
\end{proposition}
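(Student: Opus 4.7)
The sequentiality of $\phi$ is a classical fact (Example 1 of \cite{nyikos}), so the plan focuses on the fresh content: showing that $\phi$ fails to be $\kappa$-Fr\'echet--Urysohn. To this end I would construct an open set $U \subseteq \phi$ with $0 \in \overline{U} \setminus U$ from which no sequence converges to $0$. Morally this is an ``open-set upgrade'' of the classical witness $\{\tfrac{1}{n} e_1 + \tfrac{1}{m} e_n : n \geq 2,\, m \in \NN\}$ for the failure of the Fr\'echet--Urysohn property of $\phi$, where the challenge is to thicken a mere set into a genuine open region without introducing a convergent sequence.

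For each $n \geq 2$, I would set
\[
O_n := \bigl\{x \in \phi : \tfrac{1}{2n} < |x_1| < \tfrac{1}{n},\; 0 < |x_n| < \tfrac{1}{n}\bigr\}, \qquad U := \bigcup_{n \geq 2} O_n.
\]
Each $O_n$ is open because the coordinate projections $\pi_1,\pi_n$ lie in $\phi' = \mathbf{F}^{\NN}$; hence $U$ is open, and clearly $0 \notin U$. A basic neighborhood of $0$ in the direct sum topology has the form $V_\varepsilon := \{y \in \phi : \sum_n |y_n|/\varepsilon_n < 1\}$ for some sequence $\varepsilon_n > 0$. To verify $0 \in \overline{U}$, I would exhibit, for $n$ sufficiently large, the explicit witness $\tfrac{3}{4n} e_1 + t_n e_n$ with $t_n := \tfrac{1}{2}\min(\varepsilon_n, 1/n)$ lying in $O_n \cap V_\varepsilon$; this is a short numerical check.

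To close the argument I would proceed by contradiction: suppose $(x^{(k)})_{k \in \NN} \subseteq U$ converges to $0$ in $\phi$. Convergent sequences are bounded, and by the Dieudonn\'e--Schwartz theorem every bounded subset of the strict (LF)-space $\phi$ is contained in some stage $\mathbf{F}^N$, so $x^{(k)}_n = 0$ for all $n > N$ (eventually). Writing $x^{(k)} \in O_{n_k}$, the defining condition $|x^{(k)}_{n_k}| > 0$ forces $n_k \leq N$, and hence $|x^{(k)}_1| > \tfrac{1}{2n_k} \geq \tfrac{1}{2N}$, contradicting $|x^{(k)}_1| \to 0$. The main obstacle is calibrating the double bound $\tfrac{1}{2n} < |x_1| < \tfrac{1}{n}$: loose enough as $n$ varies for $\bigcup_n O_n$ to accumulate at $0$, yet tight enough within each fixed $n$ to produce a positive lower bound on the first coordinate --- it is precisely this lower bound, combined with the Dieudonn\'e--Schwartz control on the admissible indices $n_k$, that blocks every candidate convergent sequence.
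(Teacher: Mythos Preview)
Your proof is correct, and the overall architecture matches the paper's: cite Nyikos for sequentiality, then exhibit an open $U$ with $0\in\overline{U}\setminus U$ that admits no sequence converging to $0$. The details, however, diverge in an interesting way.

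The paper uses doubly-indexed open sets $U_{n,k}=\{f:|f(1)|>1/(2n),\ |f(n)|>1/(2k)\}$ with only lower bounds on the two controlled coordinates, whereas your $O_n$ carries two-sided bounds and a single index. The substantive difference is in the ``no convergent sequence'' step. The paper argues entirely by hand with the box-type neighborhood base: from $f_j\in U_{n_j,k_j}$ and $f_j(1)\to 0$ it extracts $n_j\to\infty$, then manufactures the specific neighborhood $V=\{f:|f(n_j)|<1/(4k_j)\ \text{for all }j\}$ that misses every $f_j$. You instead invoke the Dieudonn\'e--Schwartz theorem to trap the convergent (hence bounded) sequence inside a fixed stage $\mathbf{F}^N$, forcing $n_k\le N$ and hence the uniform lower bound $|x^{(k)}_1|\ge 1/(2N)$. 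Your route is shorter and more conceptual but imports external $(LF)$-space machinery; the paper's is longer but completely elementary and self-contained. One incidental remark: the upper bounds $|x_1|<1/n$ and $|x_n|<1/n$ in your $O_n$ play no role in the contradiction step---only $x^{(k)}_{n_k}\neq 0$ and $|x^{(k)}_1|>1/(2n_k)$ are used there---so your closing commentary about ``calibrating the double bound'' slightly overstates their importance, though of course the bounds do no harm.
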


\begin{proof}
The space $\phi$ is sequential by \cite[Example~1]{nyikos}. To show that $\phi$ is not $\kappa$-Fr\'{e}chet--Urysohn, we consider elements of $\phi$ as functions from $\NN$ to $\mathbf{F}$ with finite support. Recall that the sets of the form
\begin{equation} \label{equ:kFU-Ascoli-2}
\{ f\in \phi: |f(n)|<\e_n \mbox{ for every } n\in\NN\},
\end{equation}
where $\e_n >0$ for all $n\in\NN$, form a basis at $0$ of $\phi$ (see for example \cite[Example~1]{nyikos}).
For every $n,k\in\NN$, set
\[
U_{n,k}:= \left\{ f\in \phi: |f(1)|>\frac{1}{2n} \mbox{ and } |f(n)|>\frac{1}{2k} \right\},
\]
and set $U:= \bigcup_{n,k\in\NN} U_{n,k}$. It is easy to see that all the sets $U_{n,k}$ are open in $\phi$ and $0\not\in U_{n,k}$. Hence $U$ is an open subset of $\phi$ such that $0\not\in U$. To show that $\phi$ is not $\kappa$-Fr\'{e}chet--Urysohn, it suffices to prove that (A) $0\in \overline{U}$, and (B) there is no sequences in $U$ converging to $0$.

(A) Let $W$ be a basic neighborhood of zero in $\phi$ of the form (\ref{equ:kFU-Ascoli-2}). Choose an $n\in\NN$ such that $\frac{1}{n}<\e_1$, and take $k\in\NN$ such that $\frac{1}{k}<\e_n$. It is clear that $U_{n,k}\cap W$ is not empty. Thus $0\in \overline{U}$.

(B) Suppose for a contradiction that there is a sequence $S=\{ f_j\}_{j\in\NN}$ in $U$ converging to $0$. For every $j\in\NN$, take $n_j,k_j\in\NN$ such that $f_j \in U_{n_j,k_j}$. Since $f_j \to 0$, the definition of $U_{n,k}$ implies that $\frac{1}{2n_j}<|f_j(1)|\to 0$, and hence $n_j \to \infty$. Without loss of generality we can assume that $1<n_1 <n_2<\cdots$. For every $n\in\NN$, define $\e_n =\frac{1}{4k_j}$ if $n=n_j$ for some $j\in\NN$, and $\e_n =1$ otherwise. Set
\[
V:=\{f\in \phi: |f(n)|<\e_n  \mbox{ for every } n\in\NN \}.
\]
Then, $V$ is a neighborhood of $0$, and the construction of $U_{n,k}$ implies that $V\cap U_{n_j,k_j} =\emptyset$ for every $j\in\NN$. Thus $S\cap V=\emptyset$ and hence $f_j \not\to 0$, a contradiction. \qed
\end{proof}

\begin{corollary} \label{c:kFU-direct-sum}
An infinite direct sum of (non-trivial) locally convex spaces is not $\kappa$-Fr\'{e}chet--Urysohn.
\end{corollary}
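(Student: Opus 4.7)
The plan is to exhibit $\phi$ as a continuous linear retract of any infinite direct sum $E=\bigoplus_{i\in I}E_i$ of nontrivial locally convex spaces, and then conclude by Proposition \ref{p:kFU-phi} together with the fact that continuous retracts inherit the $\kappa$-Fr\'echet--Urysohn property. This inheritance is immediate from the definition: if $r\colon X\to Y\subseteq X$ is a continuous retraction, $U$ is open in $Y$, and $y\in\cl_Y(U)$, then $r^{-1}(U)$ is open in $X$ and contains $U$, so $y\in\cl_X(r^{-1}(U))$; any sequence in $r^{-1}(U)$ converging to $y$ is then carried by $r$ to a sequence in $U=r(r^{-1}(U))$ still converging to $y$.

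To build the retract, I would pick countably many distinct indices $i_1,i_2,\dots\in I$ (possible since $I$ is infinite) and, by Hahn--Banach, select a continuous linear functional $\ell_n\in E_{i_n}'$ together with $v_n\in E_{i_n}$ such that $\ell_n(v_n)=1$; this uses only that $E_{i_n}$ is a nontrivial Hausdorff lcs. Set $F:=\spn\{v_n:n\in\NN\}\subseteq E$ and define $\pi\colon E\to F$ by
\[
\pi(x):=\sum_{n\in\NN}\ell_n(x_{i_n})\,v_n,
\]
which is a finite sum for each $x\in E$. By the universal property of the direct sum topology, $\pi$ is continuous (whether $F$ carries the subspace or the direct sum topology), since $\pi|_{E_i}$ vanishes for $i\notin\{i_n\}$ and equals $\ell_n(\cdot)\,v_n$ for $i=i_n$; moreover $\pi|_F=\mathrm{id}_F$.

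The last thing to verify is that the subspace topology that $F$ inherits from $E$ agrees with the direct sum topology of $\bigoplus_n\mathbf{F}v_n\cong\phi$. The inclusion $\iota\colon(F,\tau_{ds})\hookrightarrow E$ is continuous by the universal property on each summand, so the subspace topology $\tau_{sub}$ is coarser than $\tau_{ds}$; conversely, composing the map $\pi\colon E\to(F,\tau_{ds})$ with $\iota$ yields the identity $(F,\tau_{sub})\to(F,\tau_{ds})$, which is therefore continuous, giving the reverse comparison. Hence $F\cong\phi$ as topological vector spaces and is a continuous linear retract of $E$; if $E$ were $\kappa$-Fr\'echet--Urysohn then so would be its retract $F\cong\phi$, contradicting Proposition \ref{p:kFU-phi}. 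The only mildly delicate point is the topological identification $F\cong\phi$ — handled by the sandwich argument above — while the rest is standard bookkeeping with the universal property and Hahn--Banach.
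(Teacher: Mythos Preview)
Your proof is correct and follows essentially the same strategy as the paper: extract a complemented copy of $\phi$ inside $E=\bigoplus_{i\in I}E_i$ via Hahn--Banach and then invoke Proposition~\ref{p:kFU-phi}. The only packaging difference is the inheritance step: the paper observes that $\phi$ sits in $E$ as a direct summand, so the projection $E\to\phi$ is open, and then cites the external fact (Proposition~3.3 of \cite{LiL}) that open images of $\kappa$-Fr\'echet--Urysohn spaces are $\kappa$-Fr\'echet--Urysohn; you instead use the same projection as a continuous retraction and prove from scratch that retracts inherit the property. Your route is slightly more self-contained, while the paper's is a one-line appeal to a known result, but the underlying construction is identical.
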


\begin{proof}
Let $L=\bigoplus_{i\in I} E_i$ be the direct locally convex sum of an infinite family $\{ E_i\}_{i\in I}$ of locally convex spaces. It is well known 
that every $E_i$ can be represented as a direct sum $\mathbf{F}\oplus E'_i$. Therefore $L$ contains $\phi$ as a direct summand. Since the projection of $L$ onto $\phi$ is open and the $\kappa$-Fr\'{e}chet--Urysohn property is preserved under open maps (see Proposition 3.3 of \cite{LiL}), Proposition \ref{p:kFU-phi} implies that $L$ is not a $\kappa$-Fr\'{e}chet--Urysohn space.
\qed
\end{proof}

Recall that a {\em strict $(LF)$-space} $E$ is the direct limit $E=\SI E_n$ of an increasing sequence
\[
E_0 \hookrightarrow E_1 \hookrightarrow E_2 \hookrightarrow \cdots
\]
of Fr\'{e}chet (= locally convex complete metric linear) spaces in the category of locally convex spaces and continuous linear maps. The space $\mathcal{D}(\Omega)$ of test functions  over an open subset $\Omega$ of $\IR^n$ is one of the most famous and important examples of  strict $(LF)$-spaces which are not Fr\'{e}chet.
\begin{corollary} \label{t:kFU-strict-LF}
A strict $(LF)$-space $E$ is $\kappa$-Fr\'{e}chet--Urysohn if and only if $E$ is a Fr\'{e}chet space.
\end{corollary}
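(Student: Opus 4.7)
The plan is to reduce to Proposition \ref{p:kFU-phi} by exhibiting $\phi$ as an open continuous linear image of any non-Fr\'echet strict $(LF)$-space $E$, and then to invoke the fact (Proposition 3.3 of \cite{LiL}) that the $\kappa$-Fr\'echet--Urysohn property passes through open surjections. The converse direction is immediate: a Fr\'echet space is metrizable, hence Fr\'echet--Urysohn, and in particular $\kappa$-Fr\'echet--Urysohn.

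For the main direction, suppose $E=\SI E_n$ is not Fr\'echet; as $E$ would be Fr\'echet iff the chain stabilizes, after passing to a subsequence we may assume the inclusions $E_{n-1}\subsetneq E_n$ are all strict. Strictness of the $(LF)$-structure ensures each $E_{n-1}$ is closed in the Fr\'echet space $E_n$, so Hahn--Banach produces $x_n\in E_n\setminus E_{n-1}$ together with $f_n\in E_n'$ satisfying $f_n(x_n)=1$ and $f_n|_{E_{n-1}}=0$; each such $f_n$ then extends to a continuous linear functional on the strict $(LF)$-space $E$ by the standard extension property. Define $S\colon E\to \mathbf{F}^{\NN}$ by $S(x):=(f_n(x))_{n\in\NN}$. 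For $x\in E_m$ we have $f_n(x)=0$ whenever $n>m$, so $S(x)\in\phi$ and moreover $S|_{E_m}$ takes values in the finite-dimensional subspace $\mathbf{F}^m\subseteq\phi$; the universal property of the inductive limit topology then yields continuity of $S\colon E\to \phi$. A triangular change of basis (using that the matrix $(f_j(x_k))_{j,k}$ is unipotent upper triangular) replaces $x_n$ by $y_n\in\spn\{x_1,\dots,x_n\}$ with $f_j(y_n)=\delta_{jn}$, and setting $T\colon \phi\to E$, $T(e_n)=y_n$, produces a continuous linear map (by the universal property of $\phi$ as a direct sum, since each restriction $T|_{\mathbf{F}^n}$ lands in $E_n$) with $S\circ T=\mathrm{id}_\phi$. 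Hence $P=T\circ S$ is a continuous linear projection of $E$ onto $T(\phi)$, one has the topological direct-sum decomposition $E=T(\phi)\oplus \ker S$, and the surjection $S\colon E\to\phi$ is open.

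Since $\phi$ is not $\kappa$-Fr\'echet--Urysohn by Proposition \ref{p:kFU-phi}, while open surjections preserve this property, $E$ cannot be $\kappa$-Fr\'echet--Urysohn either. The main technical obstacle is to justify carefully the Hahn--Banach extension of each $f_n$ from the Fr\'echet space $E_n$ to the inductive limit $E$, and to verify continuity of $S$ via the inductive-limit description of the topology on $E$; once those steps are secured, the construction of the section $T$ and the concluding contradiction parallel the proof of Corollary \ref{c:kFU-direct-sum} almost verbatim.
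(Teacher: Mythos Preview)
Your argument is correct, and it takes a genuinely different route from the paper's proof. The paper invokes the external characterization from \cite{Gab-LF} that a strict $(LF)$-space is Ascoli if and only if it is Fr\'echet or equals $\phi$, and then combines this with Theorem~\ref{t:k-FU-seq-Ascoli} ($\kappa$-Fr\'echet--Urysohn $\Rightarrow$ Ascoli) and Proposition~\ref{p:kFU-phi}. Your proof bypasses both of these ingredients: you show directly, via a Hahn--Banach/triangular-section construction, that every non-Fr\'echet strict $(LF)$-space admits $\phi$ as a complemented subspace and hence as an open continuous linear image, and then you appeal only to Proposition~\ref{p:kFU-phi} together with the preservation of the $\kappa$-Fr\'echet--Urysohn property under open surjections. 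This is more self-contained (it does not rely on the Ascoli machinery at all) and in fact yields the stronger structural statement that $\phi$ sits complementably in every proper strict $(LF)$-space; the paper's route, on the other hand, is a two-line deduction once the cited Ascoli result is granted. The technical points you flag (continuous extension of each $f_n$ from $E_n$ to $E$ via Hahn--Banach using that $E_n$ carries the subspace topology, continuity of $S$ via the inductive-limit universal property since each $S|_{E_m}$ lands in a finite-dimensional coordinate subspace, and openness of $S$ from the continuous projection $T\circ S$) are all standard and go through as you indicate.
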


\begin{proof}
Theorem 1.2 
of \cite{Gab-LF} states that $E$ is an Ascoli space  if and only if $E$ is a Fr\'{e}chet space or $E=\phi$. Now the assertion follows from Theorem \ref{t:k-FU-seq-Ascoli} and Proposition \ref{p:kFU-phi}. \qed
\end{proof}
Consequently, $\mathcal{D}(\Omega)$ is not a $\kappa$-Fr\'{e}chet--Urysohn space.

One of the most important classes of locally convex spaces is the class of free locally convex spaces. Following \cite{Mar}, the {\em  free locally convex space}  $L(X)$ on a Tychonoff space $X$ is a pair consisting of a locally convex space $L(X)$ and  a continuous map $i: X\to L(X)$  such that every  continuous map $f$ from $X$ to a locally convex space  $E$ gives rise to a unique continuous linear operator ${\bar f}: L(X) \to E$  with $f={\bar f} \circ i$. The free locally convex space $L(X)$ always exists and is essentially unique.

\begin{corollary} \label{t:kFU-free-LCS}
Let $X$ be a Tychonoff space. Then $L(X)$ is a $\kappa$-Fr\'{e}chet--Urysohn space if and only if $X$ is finite.
\end{corollary}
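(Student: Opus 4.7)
The plan is to reduce the assertion to the known characterization of when $L(X)$ is Ascoli and then invoke Theorem \ref{t:k-FU-seq-Ascoli}. The ``if'' direction is immediate: if $X$ is finite (and Tychonoff, hence discrete), then by the universal property $L(X)$ is a finite-dimensional locally convex space, so it is metrizable, hence Fr\'echet--Urysohn and in particular $\kappa$-Fr\'echet--Urysohn.

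For the converse, suppose that $L(X)$ is $\kappa$-Fr\'echet--Urysohn. Then by Theorem \ref{t:k-FU-seq-Ascoli}, $L(X)$ is an Ascoli space. The main result of \cite{Gabr-L(X)-Ascoli} characterizes Ascoliness of $L(X)$: $L(X)$ is Ascoli if and only if $X$ is finite. Applying that characterization, $X$ must be finite, which completes the proof.

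The only subtlety is the precise form of the Ascoli characterization of $L(X)$: the author has already used the same strategy (Ascoli characterization plus Theorem \ref{t:k-FU-seq-Ascoli}) in Corollaries \ref{t:Cech-complete-Cp-Ascoli}--\ref{t:kFU-strict-LF}, so the citation pattern is uniform. If one wished to give a more self-contained argument, the alternative would be to observe that for infinite $X$, $L(X)$ contains (a copy of) $\phi$ as a direct summand or as a quotient — e.g.\ by choosing a countable discrete closed subspace of $X$ together with a continuous retraction to an infinite closed discrete set, which produces a continuous open linear surjection of $L(X)$ onto $\phi$ — and then combine Proposition \ref{p:kFU-phi} with the fact that the $\kappa$-Fr\'echet--Urysohn property is preserved by open maps, as in the proof of Corollary \ref{c:kFU-direct-sum}. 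The hard part here is precisely producing that projection onto $\phi$ for an arbitrary infinite Tychonoff $X$, which is why routing through the external Ascoli characterization is the cleaner route.
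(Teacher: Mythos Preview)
Your overall strategy is the same as the paper's --- apply Theorem \ref{t:k-FU-seq-Ascoli} and then invoke the Ascoli characterization of $L(X)$ from \cite{Gabr-L(X)-Ascoli} --- but you have misquoted that characterization, and this creates a genuine gap. Theorem~1.2 of \cite{Gabr-L(X)-Ascoli} (as cited in the paper) says that $L(X)$ is Ascoli if and only if $X$ is a \emph{countable discrete} space, not if and only if $X$ is finite. Indeed, for countably infinite discrete $X$ one has $L(X)\cong\phi$, and $\phi$ is sequential (Proposition~\ref{p:kFU-phi}), hence a $k$-space, hence Ascoli. So the Ascoli step alone only gets you down to ``$X$ countable discrete''; it does not finish the argument.

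The paper closes this gap exactly with the ingredient you mention only as an optional alternative: if $X$ is countably infinite discrete then $L(X)\cong\phi$, and Proposition~\ref{p:kFU-phi} shows $\phi$ is not $\kappa$-Fr\'echet--Urysohn. Thus Proposition~\ref{p:kFU-phi} is not a stylistic alternative here but a required second step after the Ascoli reduction. Once you correct the statement of the cited theorem and add this step, your proof coincides with the paper's.
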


\begin{proof}
It is well known that $L(D)$ over a countably infinite discrete space $D$ is topologically isomorphic to $\phi$.
By Theorem 1.2 
of \cite{Gabr-L(X)-Ascoli}, $L(X)$ is an Ascoli space if and only if $X$ is a countable discrete space. This fact, Theorem \ref{t:k-FU-seq-Ascoli} and Proposition \ref{p:kFU-phi} immediately imply   the assertion. \qed
\end{proof}


\section{Proof of Theorem \ref{t:Baire-class}} \label{sec:2}


We start from several lemmas in which we construct special functions from $B_1(X,G)$.
For every $g\in G$, define $\mathbf{g}:X\to G$ by $\mathbf{g}(x)=g$ for every $x\in X$.

\begin{lemma} \label{l:Y-Tychonoff}
Let $G$ be a nontrivial abelian (Hausdorff) topological group and let $X$ be a $G$-Tychonoff space. If $a_1,\dots,a_l\in X$ are distinct points, $U_{1}, \dots , U_{l}$ are pairwise disjoint open neighborhoods of $a_1,\dots,a_l$, respectively, and $g_0,g_1,\dots,g_l\in G$, then there is a continuous function $f:X\to G$ such that $f(X\setminus \bigcup_{i=1}^l U_i)=\{ g_0\}$ and $f(a_i)=g_i$ for every $i=1,\dots,l$.
\end{lemma}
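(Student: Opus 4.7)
The plan is to build $f$ as a sum of ``bump functions,'' each supported (up to a constant shift) on a single $U_i$, then shift the whole expression by $g_0$. For concreteness I would write the abelian group operation on $G$ additively with identity $0$. For each $i\in\{1,\dots,l\}$, set $A_i:=X\setminus U_i$; this is closed in $X$, and since $a_i\in U_i$, we have $a_i\notin A_i$.

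For each $i$ I would produce a continuous function $f_i:X\to G$ with
\[
f_i(a_i)=g_i-g_0\quad\text{and}\quad f_i(A_i)=\{0\}.
\]
If $g_i\neq g_0$, then $g_i-g_0$ and $0$ are two distinct points of $G$, so the $G$-Tychonoff property applied to the closed set $A_i$ and the point $a_i\notin A_i$ delivers such an $f_i$ at once. If $g_i=g_0$, then the $G$-Tychonoff property cannot be invoked directly because it requires two \emph{distinct} target values, but the constant function $\mathbf{0}$ satisfies the requirements trivially.

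Finally, I would define
\[
f(x):=g_0+\sum_{i=1}^l f_i(x).
\]
Because $G$ is an abelian topological group and each $f_i$ is continuous, $f$ is continuous. For any $x\in X\setminus\bigcup_{i=1}^l U_i$ we have $x\in A_i$ for all $i$, so every $f_i(x)$ equals $0$ and $f(x)=g_0$; for $x=a_j$ the pairwise disjointness of the $U_i$'s forces $a_j\in A_i$ whenever $i\neq j$, hence only the $j$-th summand contributes and $f(a_j)=g_0+(g_j-g_0)=g_j$. There is no real obstacle; the only point requiring attention is the degenerate case $g_i=g_0$, handled by the constant function above.
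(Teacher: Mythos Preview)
Your proof is correct and follows essentially the same approach as the paper: construct bump functions $f_i$ with $f_i(a_i)=g_i-g_0$ and $f_i(X\setminus U_i)=\{0\}$ via the $G$-Tychonoff property, then set $f:=\mathbf{g}_0+\sum_{i=1}^l f_i$. You are in fact slightly more careful than the paper, which does not single out the degenerate case $g_i=g_0$.
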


\begin{proof}
Since $X$ is $G$-Tychonoff, for every $i=1,\dots,l$, there is a continuous function $f_i:X\to G$ such that $f_i(X\setminus U_i)=\{ 0\}$ and $f(a_i)=g_i -g_0$. Set $f:=f_1+\cdots+ f_l +\mathbf{g}_0$. Then $f$ is as desired. \qed
\end{proof}

\begin{corollary} \label{c:Baire-1}
Let $G$ be a nontrivial abelian group and let $X=\{ x_n\}_{n\in\NN}$ be a countable $G$-Tychonoff first countable space. Then $B_1(X,G)=G^X$.
\end{corollary}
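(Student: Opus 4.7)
The plan is to show the nontrivial inclusion $G^X\subseteq B_1(X,G)$ by constructing, for an arbitrary $f\in G^X$, a sequence of continuous functions $\{f_n\}_{n\in\NN}\subseteq C(X,G)$ converging to $f$ pointwise. The key idea is that on a countable space, pointwise convergence only requires eventual agreement at each point, so it suffices to produce continuous $f_n$ that agree with $f$ on longer and longer initial segments of the enumeration $\{x_n\}_{n\in\NN}$.

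First I would fix $f\in G^X$ and work with a prescribed enumeration $X=\{x_n\}_{n\in\NN}$. For each $n\in\NN$, since $X$ is Hausdorff (being $G$-Tychonoff with $G$ nontrivial), the distinct points $x_1,\dots,x_n$ can be separated by pairwise disjoint open neighborhoods $U_1,\dots,U_n$. With these data in hand, I would apply Lemma~\ref{l:Y-Tychonoff} to the points $a_i=x_i$, the neighborhoods $U_i$, and the group elements $g_0=0$, $g_i=f(x_i)$ for $i=1,\dots,n$. This yields a function $f_n\in C(X,G)$ satisfying $f_n(x_i)=f(x_i)$ for every $i=1,\dots,n$.

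Having built the sequence, the final step is to verify pointwise convergence. For an arbitrary $k\in\NN$, the construction gives $f_n(x_k)=f(x_k)$ for every $n\geq k$, so $f_n(x_k)\to f(x_k)$ in $G$. Hence $f_n\to f$ pointwise on $X$, which means $f\in B_1(X,G)$. Since $f\in G^X$ was arbitrary, this gives $G^X\subseteq B_1(X,G)$, and the opposite inclusion is immediate from the definition.

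There is really no serious obstacle here: the countability of $X$ reduces the approximation problem to finite interpolation, and Lemma~\ref{l:Y-Tychonoff} handles each finite stage. The first-countability assumption on $X$ is inherited from the standing hypothesis of Theorem~\ref{t:Baire-class} and is not used in this argument; only the Hausdorff property (available from $G$-Tychonoffness together with $G$ being nontrivial) and the interpolation provided by Lemma~\ref{l:Y-Tychonoff} are needed.
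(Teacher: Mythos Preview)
Your proof is correct and follows essentially the same approach as the paper: apply Lemma~\ref{l:Y-Tychonoff} to produce continuous functions $f_n$ agreeing with $f$ on $\{x_1,\dots,x_n\}$, and observe that $f_n\to f$ pointwise. You simply spell out a few details (choosing disjoint neighborhoods, noting Hausdorffness) that the paper leaves implicit.
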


\begin{proof}
Fix $f\in G^X$. For every $n\in\NN$, by Lemma \ref{l:Y-Tychonoff}, there is a continuous function $f_n:X\to G$ such that $f_n(x_k)=f(x_k)$ for every $k=1,\dots,n$. Then  $f_n \to f$ in $G^X$. Thus $f\in B_1(X,G)$. \qed
\end{proof}

Let $G$ be a nontrivial abelian topological group with zero $0$ and let $X$ be a $G$-Tychonoff space.
Recall that the sets of the form
\[
[F;V]:=\big\{ f\in G^X:  f(F)\subseteq V \big\},
\]
where $F$ is a finite subset of $X$ and $V$ is an open neighborhood of $0$, form a base of the pointwise topology on $G^X$ at zero function $\mathbf{0}$.
 For a function $f:X\to G$ and a subset  $A\subseteq G$, the set $\mathrm{supp}(f):=f^{-1}(G\setminus \{ 0\})$  is called the {\em support of} $f$ and define
\[
\begin{aligned}
\sigma(f) & := \big\{ h\in G^X: \{ x\in X: h(x)\not= f(x)\} \mbox{ is finite}\big\},\\
\sigma(f,A) & := \big\{ h\in \sigma(f): h(x)\in A \mbox{ for every } x\in X \mbox{ such that } h(x)\not= f(x)\big\}.
\end{aligned}
\]
Also we set $\sigma(f,g):=\sigma(f,\{ g\})$.

\begin{lemma} \label{l:E-Tychonoff}
Let $G$ be an absolutely convex subset of a locally convex space $E$, and let $X$ be a Tychonoff first countable space. Then:
\begin{enumerate}
\item[{\rm (i)}] $X$ is a $G$-Tychonoff space.
\item[{\rm (ii)}] Let $\{ U_n\}_{n< N}$, $1< N\leq\infty$, be a disjoint family of open subsets of $X$ and let $x_n\in U_n$ for every $n<N$.  Then, for every $g_1,g_2,\dots, g_N \in G$, the function
\[
f(x):= \left\{
\begin{aligned}
g_k, & \mbox{ if } x=x_k \mbox{ for some } k<N, \\
g_N, & \mbox{ if } x\in X\setminus \{ x_k: k<N\}
\end{aligned} \right.
\]
belongs to $B_1(X,G)$. In particular, $\sigma\big(\mathbf{g},G\big) \subseteq B_1(X,G)$ for every $g\in G$.
\end{enumerate}
\end{lemma}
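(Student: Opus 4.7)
The plan is to reduce both parts to Lemma~\ref{l:Y-Tychonoff}, using complete regularity of $X$ for (i) and first countability of $X$ for (ii). For (i), given a closed $A\subseteq X$, a point $x\in X\setminus A$, and distinct $y,z\in G$, Tychonoffness of $X$ supplies a continuous $h\colon X\to[0,1]$ with $h(x)=1$ and $h|_A\equiv 0$; setting
\[
f(t):=(1-h(t))z+h(t)y
\]
produces a continuous map $X\to E$ that equals $y$ at $x$ and $z$ on $A$. Since $G$ is convex and $y,z\in G$, each $f(t)$ is a convex combination of $y$ and $z$ lying in $G$, so $f\in C(X,G)$ and $X$ is $G$-Tychonoff. (Only convexity of $G$ enters; balancedness and the fact $0\in G$ play no role here.)

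For (ii), by (i) Lemma~\ref{l:Y-Tychonoff} is available on $X$. Using first countability, choose for each $k<N$ a decreasing local base $\{W_m^k\}_{m\in\NN}$ at $x_k$ with $W_m^k\subseteq U_k$, and set $K_n:=\min(n,N-1)$. For each $n$, Lemma~\ref{l:Y-Tychonoff} applied to the pairwise disjoint open sets $W_n^1,\dots,W_n^{K_n}$ (inheriting disjointness from the $U_k$), the points $x_1,\dots,x_{K_n}$, prescribed values $g_1,\dots,g_{K_n}$, and background value $g_N$ yields a continuous $f_n\colon X\to G$ with $f_n(x_k)=g_k$ for every $k\le K_n$ and $f_n\equiv g_N$ off $\bigcup_{k\le K_n}W_n^k$.

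It then remains to verify $f_n\to f$ pointwise. At $x=x_k$ one has $f_n(x_k)=g_k=f(x_k)$ as soon as $n\ge k$. If $x$ avoids every $x_k$ and lies outside $\bigcup_{k<N}U_k$, then $x\notin W_n^k$ for all $n,k$, so $f_n(x)=g_N$. If $x\in U_j\setminus\{x_j\}$ for the (unique) $j$, first countability provides some $m_0$ with $x\notin W_m^j$ for $m\ge m_0$, while disjointness of the $U_k$ forces $x\notin W_n^k$ for $k\ne j$; hence $f_n(x)=g_N$ for $n\ge\max(j,m_0)$, matching $f(x)=g_N$. This establishes $f\in B_1(X,G)$. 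For the ``in particular'' claim, any $h\in\sigma(\mathbf{g},G)$ differs from $\mathbf{g}$ on a finite set $\{x_1,\dots,x_m\}$ with $h(x_k)\in G$; Hausdorffness of $X$ provides pairwise disjoint open neighborhoods $U_1,\dots,U_m$, and (ii) with $N=m+1$ and $g_{m+1}=g$ places $h$ in $B_1(X,G)$. The only delicate point is the convergence check for $x\in U_j\setminus\{x_j\}$, which is precisely where first countability is used.
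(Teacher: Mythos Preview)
Your proof of (i) is essentially the same as the paper's, and your pointwise convergence argument in (ii) is correct and matches the paper's reasoning.

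The gap is in your invocation of Lemma~\ref{l:Y-Tychonoff} for part (ii). That lemma assumes $G$ is an abelian topological group, but here $G$ is only an absolutely convex subset of $E$ and is generally not closed under addition (think of the closed unit ball of a normed space). The proof of Lemma~\ref{l:Y-Tychonoff} builds $f = f_1 + \cdots + f_l + \mathbf{g}_0$ where each $f_i\in C(X,G)$ satisfies $f_i(a_i) = g_i - g_0$; but $g_i - g_0$ need not lie in $G$, so $G$-Tychonoffness cannot be used to produce such $f_i$, and even if one could, the sum need not take values in $G$. Thus the lemma's hypotheses are genuinely not met, and its conclusion does not follow formally.

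The paper handles this by proving a tailored analogue (its Claim~1): using the construction from (i), one obtains continuous $f_k : X \to [0, g_k - g_0] \subseteq E$ (not into $G$) with $f_k(a_k) = g_k - g_0$ and $f_k \equiv 0$ off $U_k$. The key observation is that since the $U_k$ are pairwise disjoint, at each $x$ at most one $f_k(x)$ is nonzero, so
\[
f(x) = f_k(x) + g_0 = \alpha(g_k - g_0) + g_0 = \alpha g_k + (1-\alpha) g_0 \in G
\]
for some $\alpha\in[0,1]$, by convexity of $G$. Once this claim is in hand, your construction of the $f_n$ and the convergence check go through verbatim. So your overall strategy is right; you just need to replace the direct appeal to Lemma~\ref{l:Y-Tychonoff} by this convexity-and-disjointness argument to guarantee the approximants land in $G$.
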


\begin{proof}
(i) Let $x\in X$ and $A$ be a closed subset of $X$ such that $x\not\in A$. If $g,h$ are two distinct points in $G$, then the closed interval $[h,g]:=\{ h+\alpha(g-h): \alpha\in[0,1]\}\subseteq G$ is topologically isomorphic to $[0,1]$. As $X$ is Tychonoff, one can find a continuous function $f:X\to [h,g]$ such that $f(x)= g$ and $f(A)=\{ h\}$.

(ii) First we prove the following assertion which is similar to Lemma \ref{l:Y-Tychonoff}.

\smallskip
{\em Claim 1.  If $a_1,\dots,a_l\in X$ are distinct points, $U_{1}, \dots , U_{l}$ are pairwise disjoint open neighborhoods of $a_1,\dots,a_l$, respectively, and $g_0,g_1,\dots,g_l\in G$, then there is a continuous function $f:X\to G$ such that $f(X\setminus \bigcup_{i=1}^l U_i)=\{ g_0\}$ and $f(a_i)=g_i$ for every $i=1,\dots,l$.}

Indeed, for every $k=1,\dots,l$, by the proof of (i), there is a continuous function $f_k:X\to [0,g_k-g_0]$ such that $f_k(X\setminus U_k)=\{ 0\}$ and $f_k(a_k)=g_k -g_0$. Set $f:=f_1+\cdots+ f_l+\mathbf{g}_0$. We check that $\mathrm{Im}(f) \subseteq G$: if $x\in U_k$ for some $k=1,\dots,l$, we obtain $f_k(x)=\alpha(g_k-g_0)$ for some $\alpha\in[0,1]$ and hence
\[
f(x)= f_k(x) + \mathbf{g}_0(x)= \alpha(g_k-g_0) + g_0 =\alpha g_k +(1-\alpha)g_0 \in G,
\]
in particular, $f(a_k)=(g_k-g_0) + g_0=g_k$; if $x\not\in \bigcup_{k=1}^l U_k$, then $f(x)=\mathbf{g}_0(x)=g_0 \in G$. The claim is proved.

\smallskip
Now, for every $k<N$, choose a decreasing open base $\{ U_{n,k}\}_{n\in\NN}$ at $x_k$ such that $\overline{U_{1,k}}\subseteq U_k$.

\smallskip
{\em Case 1. Assume that $N$ is finite}. For every $n\in\NN$, by Claim 1, choose a continuous function $f_{n}:X\to G$ such that $f_n(X\setminus \bigcup_{k=1}^{N-1} U_{n,k})=\{ g_N\}$ and $f_n(x_k)=g_k$ for every $k=1,\dots,N-1$. It is clear that $f_n\to f$ in the pointwise topology. Thus $f\in B_1(X,G)$.

\smallskip
{\em Case 2. Assume that $N=\infty$}. For every $n\in\NN$, by Claim 1, choose a continuous function $f_{n}:X\to G$ such that $f_n(X\setminus \bigcup_{k=1}^{n} U_{n,k})=\{ g_\infty\}$ and $f_n(x_k)=g_k$ for every $k=1,\dots,n$. It is easy to see that $f_n\to f$ in the pointwise topology. Thus $f\in B_1(X,G)$.

Finally, the inclusion  $\sigma\big(\mathbf{g},G\big) \subseteq B_1(X,G)$ follows from the trivial fact that for every finite subset $\{ x_1,\dots,x_n\}$ of $X$ there is a disjoint family $\{ U_1,\dots,U_n\}$ of open sets of $X$ such that $x_i\in U_i$ for every $i=1,\dots,n$. \qed
\end{proof}

Using Lemma \ref{l:Y-Tychonoff} instead of Claim 1 in the proof of (ii) of Lemma \ref{l:E-Tychonoff} one can prove the following result.

\begin{lemma} \label{l:Baire-1}
Let $G$ be a nontrivial abelian topological group and let $X$ be an infinite $G$-Tychonoff first countable space. Let $\{ U_n\}_{n< N}$, $1< N\leq\infty$, be a disjoint family of open subsets of $X$ and let $x_n\in U_n$ for every $n<N$. Then, for every $g_1,g_2,\dots, g_N \in G$, the function
\[
f(x):= \left\{
\begin{aligned}
g_n, & \mbox{ if } x=x_n \mbox{ for some } n<N, \\
g_N, & \mbox{ if } x\in X\setminus \{ x_n: n<N\}
\end{aligned} \right.
\]
belongs to $B_1(X,G)$.  In particular, $\sigma\big(\mathbf{g}\big) \subseteq B_1(X,G)$ for every $g\in G$.
\end{lemma}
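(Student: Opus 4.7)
The plan is to mimic the proof of Lemma \ref{l:E-Tychonoff}(ii) almost verbatim, substituting a direct application of Lemma \ref{l:Y-Tychonoff} for the internal Claim 1 used there. The role of Claim 1 was to interpolate prescribed values on finitely many points via a single continuous function into $G$, and it relied on the absolute convexity of $G$ inside a locally convex space. Lemma \ref{l:Y-Tychonoff} delivers precisely that same interpolation in the broader setting of an arbitrary abelian $G$-Tychonoff space, so the skeleton of the earlier argument carries over without modification; only the input lemma changes.

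More concretely, first I would use first countability of $X$ to fix, for each $k<N$, a decreasing open base $\{U_{n,k}\}_{n\in\NN}$ at $x_k$ with $\overline{U_{1,k}}\subseteq U_k$; since the $U_k$ are pairwise disjoint, so are the $U_{n,k}$ across $k$ for each fixed $n$. For each $n\in\NN$ I would then invoke Lemma \ref{l:Y-Tychonoff} to produce a continuous $f_n:X\to G$ taking value $g_k$ at $x_k$ and value $g_N$ off $\bigcup_k U_{n,k}$, where $k$ ranges over $\{1,\dots,N-1\}$ when $N$ is finite and over $\{1,\dots,n\}$ when $N=\infty$. Pointwise convergence $f_n\to f$ is then routine: at each $x_k$ one has $f_n(x_k)=g_k$ for all sufficiently large $n$, while at any $x\notin\{x_n:n<N\}$ the shrinking of the bases forces $x\notin\bigcup_k U_{n,k}$ eventually, so $f_n(x)=g_N$. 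Hence $f\in B_1(X,G)$. For the ``in particular'' clause, given $h\in\sigma(\mathbf{g})$ I would enumerate the finite set on which $h$ differs from $g$ as $\{x_1,\dots,x_m\}$; since $G$ is nontrivial Hausdorff and $X$ is $G$-Tychonoff, $X$ is Hausdorff, so pairwise disjoint open neighborhoods $U_1,\dots,U_m$ of these points exist and the first part applies with $N=m+1$, $g_k=h(x_k)$, and $g_{m+1}=g$.

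I do not anticipate any genuine obstacle: the whole argument rests on Lemma \ref{l:Y-Tychonoff}, whose constructed function $f=f_1+\cdots+f_l+\mathbf{g}_0$ lands in $G$ for free, because pairwise disjointness of the $U_i$ ensures that at most one $f_i$ is nonzero at any given point, so no convexity is needed to keep the sum inside $G$. The one small bookkeeping point is confirming in the $N=\infty$ case that each fixed $x_k$ is assigned the value $g_k$ by $f_n$ for all $n\geq k$, which is immediate from the indexing described above.
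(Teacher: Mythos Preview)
Your proposal is correct and matches the paper's own approach exactly: the paper simply remarks that the result follows by repeating the proof of Lemma~\ref{l:E-Tychonoff}(ii) with Lemma~\ref{l:Y-Tychonoff} in place of Claim~1, which is precisely what you outline. Your additional commentary on why no convexity is needed (at most one $f_i$ nonzero at each point) and the bookkeeping for the $N=\infty$ case are accurate and fill in details the paper leaves implicit.
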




Let $E$ be a locally convex space (lcs for short) and let $X$ be a first countable Tychonoff space (so $X$ is $E$-Tychonoff, see Lemma \ref{l:E-Tychonoff}). A function $f:X\to E$ is called {\em bounded} if the image $\mathrm{Im}(f)$ of $f$ is a bounded subset of $E$.  If $S$ is a nontrivial absolutely convex subset of $E$, let $B_\alpha^b(X,S)$ be the family of all {\em bounded} functions from $B_\alpha(X,S)$.

\begin{theorem} \label{t:kFU-B1}
Let $G$ be a nontrivial abelian metrizable group and let $X$ be a $G$-Tychonoff first countable space. Assume that a subgroup $H$ of $G^X$ satisfies one of the following conditions:
\begin{enumerate}
\item[{\rm (a)}] $H$ contains $B_1(X,G)$;
\item[{\rm (b)}] $G$ is a metrizable lcs and $B_1^b(X,G)\subseteq H$.
\end{enumerate}
Then $H$ is a  $\kappa$-Fr\'{e}chet--Urysohn space and hence an Ascoli space.
\end{theorem}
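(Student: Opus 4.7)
The plan is to locate, inside $H$, a copy of the ``finite support'' subgroup $\bigoplus_X G$ of $G^X$, and then invoke Corollary~\ref{c:kFU-metrizable-group} directly, applied to the cardinal $\kappa=|X|$ and to $H$ viewed as a subgroup of $G^X$. Once this is done, Theorem~\ref{t:k-FU-seq-Ascoli} immediately upgrades ``$\kappa$-Fr\'{e}chet--Urysohn'' to ``Ascoli''.

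The only substantive step is verifying $\bigoplus_X G\subseteq H$. In case (a), this is immediate from Lemma~\ref{l:Baire-1}: given any $f\in G^X$ with finite support $\{x_1,\dots,x_n\}$, the first countability of $X$ provides a disjoint family $\{U_1,\dots,U_n\}$ of open neighborhoods of the $x_i$, and Lemma~\ref{l:Baire-1} (applied with $g_N=0$) then yields $f\in B_1(X,G)\subseteq H$; in other words, $\bigoplus_X G=\sigma(\mathbf{0})\subseteq B_1(X,G)\subseteq H$. In case (b), where $G$ is a metrizable lcs and only $B_1^b(X,G)\subseteq H$ is assumed, I would instead invoke Lemma~\ref{l:E-Tychonoff}(ii) (the whole lcs $G$ is absolutely convex in itself) to conclude $\sigma(\mathbf{0},G)\subseteq B_1(X,G)$. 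The extra observation I need is that every finitely supported function $f:X\to G$ has finite image $\{0,f(x_1),\dots,f(x_n)\}$, which is a bounded subset of $G$, so $f$ lies in $B_1^b(X,G)$. Hence $\bigoplus_X G\subseteq B_1^b(X,G)\subseteq H$ in case (b) as well.

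With the containment $\bigoplus_X G\subseteq H\subseteq G^X$ in hand, Corollary~\ref{c:kFU-metrizable-group} (taking the cardinal there to be $|X|$ and the group to be the nontrivial metrizable $G$) tells us that $H$ is $\kappa$-Fr\'{e}chet--Urysohn. Theorem~\ref{t:k-FU-seq-Ascoli} then gives that $H$ is Ascoli, completing the proof.

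The main potential obstacle is really the bookkeeping in case (b): one must confirm that the ``test functions'' produced by Lemma~\ref{l:E-Tychonoff}(ii) actually land in $B_1^b(X,G)$ rather than merely in $B_1(X,G)$. As noted, this reduces to the trivial remark that a finite subset of a locally convex space is bounded, so no serious difficulty arises. An alternative route, which I would fall back on if for some reason the finite-support subgroup were not available, is to exhibit some other dense subspace of $H$ that is $\kappa$-Fr\'{e}chet--Urysohn and apply Corollary~\ref{c:kFU-dense-subgroup} (using that $H$ is a topological group and hence homogeneous); but the direct appeal to Corollary~\ref{c:kFU-metrizable-group} is cleaner and avoids invoking homogeneity.
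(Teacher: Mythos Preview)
Your proposal is correct and follows essentially the same approach as the paper: both arguments verify $\sigma(\mathbf{0})=\bigoplus_X G\subseteq H$ via Lemma~\ref{l:Baire-1} (case~(a)) and Lemma~\ref{l:E-Tychonoff}(ii) (case~(b)), then apply Corollary~\ref{c:kFU-metrizable-group} and Theorem~\ref{t:k-FU-seq-Ascoli}. Your treatment of case~(b) is in fact more careful than the paper's, since you explicitly note that finitely supported functions have finite (hence bounded) image and so land in $B_1^b(X,G)$, a point the paper leaves implicit.
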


\begin{proof}
By Lemma \ref{l:E-Tychonoff} and Lemma \ref{l:Baire-1}, we have $\sigma(\mathbf{0})\subseteq H$. Therefore, by Corollary \ref{c:kFU-metrizable-group},  $H$ is a $\kappa$-Fr\'{e}chet--Urysohn space. Thus, by Theorem \ref{t:k-FU-seq-Ascoli}, $H$ is an Ascoli space. \qed
\end{proof}

Recall that a Tychonoff space $X$
\begin{enumerate}
\item[$\bullet$] is a {\em $\sigma$-space} if $X$ has a $\sigma$-locally finite network (see \cite{gruenhage} for details);
\item[$\bullet$] is a {\em $k$-space} if for each non-closed subset $A\subset X$ there is a compact subset $K\subset X$ such that $K\cap A$ is not closed in $K$;
\item[$\bullet$] has  {\em  countable tightness} if whenever $A\subseteq X$ and $x\in \overline{A}$, then $x\in \overline{B}$ for some countable $B\subseteq A$;
\item[$\bullet$] has {\em countable $cs^\ast$-character} if $X$ has a countable $cs^\ast$-network at each point $x\in X$ (i.e., there is a countable family $\mathcal{N}_x$ of subsets of $X$ such that for each sequence $(x_n)_{n\in\NN}$ in $X$ converging to  $x$ and for each neighborhood $O_x$ of $x$ there is a set $N\in\mathcal{N}_x$ such that $x\in N\subseteq O_x$ and the set $\{n\in\NN :x_n\in N\}$ is infinite);
\end{enumerate}
It is well known that every compact subset of a $\sigma$-space is metrizable, see \cite{gruenhage}. Topological groups with countable $cs^\ast$-character are studied in \cite{BZ}. In \cite{GaK} we proved that a Baire topological vector space $E$ is metrizable if and only if $E$ has  countable $cs^\ast$-character, and the same metrizability condition holds also for $b$-Baire-like locally convex spaces.

\begin{theorem} \label{t:Baire-class-tightness}
Let $G$ be a nontrivial abelian metrizable group, $X$ be a $G$-Tychonoff first countable space and let $H$ be a subspace of $G^X$ containing $\sigma(\mathbf{0},g)\cup\{ \mathbf{g}\}$  for some nonzero $g\in G$. Then the following assertions are equivalent:
\begin{enumerate}
\item[{\rm (i)}] $X$ is countable;
\item[{\rm (ii)}] $H$ is a metrizable space;
\item[{\rm (iii)}] $H$ has countable tightness;
\item[{\rm (iv)}]  $H$ is a $\sigma$-space;
\item[{\rm (v)}] $H$ has countable $cs^\ast$-character.
\end{enumerate}
In particular, (i)-(v) are equivalent if $H$ satisfies one of the following conditions:
\begin{enumerate}
\item[{\rm (a)}] $H$ contains $B_1(X,G)$;
\item[{\rm (b)}] $G$ is a metrizable lcs and $B_1^b(X,S)\subseteq H$, where $S$ is a nontrivial absolutely convex subset of $G$.
\end{enumerate}
\end{theorem}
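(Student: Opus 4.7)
The plan is to establish the cycle $(i)\Rightarrow(ii)\Rightarrow\{(iii),(iv),(v)\}$ together with the reverse implications $(iii),(iv),(v)\Rightarrow(i)$. The first arrow is immediate: when $X$ is countable, $G^X$ is a countable product of the metrizable group $G$, hence metrizable, and so is the subspace $H$. The chain $(ii)\Rightarrow(iii),(iv),(v)$ is standard for metric spaces: a countable local base at each point witnesses countable tightness and serves as a countable $cs^\ast$-network, while the $\sigma$-discrete open base is itself a $\sigma$-discrete network.

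The heart of the proof is the direct implication $\neg(i)\Rightarrow\neg(iii)$, which shows that the point $\mathbf{g}$ witnesses uncountable tightness when $X$ is uncountable. Since $G$ is Hausdorff and $g\ne 0$, choose a symmetric open neighborhood $V$ of $0$ in $G$ with $g\notin V$; by symmetry also $-g\notin V$, so $0\notin g+V$. For each finite $F\subseteq X$ the function $g\cdot\mathbf{1}_F$ lies in $\sigma(\mathbf{0},g)\subseteq H$ and in the basic neighborhood $[F;g+V]$ of $\mathbf{g}$, hence $\mathbf{g}\in\overline{\sigma(\mathbf{0},g)}^{H}$. On the other hand, for any countable $B=\{f_n\}_n\subseteq\sigma(\mathbf{0},g)$, the set $S=\bigcup_n\supp(f_n)$ is countable, so we may choose $x_0\in X\setminus S$. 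Then $W=\{h\in H:h(x_0)\in g+V\}$ is a neighborhood of $\mathbf{g}$ disjoint from $B$ (because $f_n(x_0)=0\notin g+V$), witnessing $\mathbf{g}\notin\overline{B}$.

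To close the cycle I invoke the classical theorem that every $\sigma$-space has countable tightness, giving $(iv)\Rightarrow(iii)$, and argue $(v)\Rightarrow(i)$ directly at the point $\mathbf{0}$ (since a countable $cs^\ast$-character need not entail countable tightness in general). Given a countable $cs^\ast$-network $\{N_k\}$ at $\mathbf{0}$ in $H$, set $S_k=\bigcup\{\supp(f):f\in N_k\cap(\sigma(\mathbf{0},g)\setminus\{\mathbf{0}\})\}$. If every $S_k$ is countable, pick distinct $y_n\in X\setminus\bigcup_k S_k$ (possible because $X$ is uncountable) and set $h_n=g\cdot\mathbf{1}_{\{y_n\}}\in\sigma(\mathbf{0},g)$; then $h_n\to\mathbf{0}$ in $H$ but $h_n\notin N_k$ for any $n,k$, contradicting the $cs^\ast$-property applied to $U=H$ itself as neighborhood of $\mathbf{0}$. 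The case where some $S_k$ is uncountable is the main anticipated obstacle, to be handled by a diagonal argument exploiting the constraint that such $N_k$ can be contained only in basic neighborhoods $[E;V]$ with $E\cap S_k=\emptyset$, which makes each ``loose'' $N_k$ useless for most neighborhoods of $\mathbf{0}$.

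The ``in particular'' clauses are routine Baire-1 embeddings. In case $(a)$, Lemma~\ref{l:Baire-1} gives $\sigma(\mathbf{0},g)\subseteq\sigma(\mathbf{0})\subseteq B_1(X,G)\subseteq H$, and $\mathbf{g}\in C(X,G)\subseteq B_1(X,G)\subseteq H$. In case $(b)$, fixing any nonzero $g\in S$, Lemma~\ref{l:E-Tychonoff}(ii) applied to the absolutely convex set $S$ yields $\sigma(\mathbf{0},g)\subseteq B_1(X,S)$ with values in the bounded set $\{0,g\}\subseteq S$, hence $\sigma(\mathbf{0},g)\subseteq B_1^b(X,S)\subseteq H$; similarly $\mathbf{g}$ is a bounded continuous function valued in $S$, so $\mathbf{g}\in B_1^b(X,S)\subseteq H$. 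In both cases the main hypothesis $\sigma(\mathbf{0},g)\cup\{\mathbf{g}\}\subseteq H$ holds, so the equivalence of (i)--(v) transfers.
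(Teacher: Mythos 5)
Your reductions $(i)\Rightarrow(ii)\Rightarrow\{(iii),(iv),(v)\}$ and your proof of $(iii)\Rightarrow(i)$ are correct and coincide with the paper's argument (the paper also works with the set $A=\sigma(\mathbf{0},g)$, the point $\mathbf{g}$, and the neighborhood $[\{z\};V]$ for $z$ outside the countable union of supports). The ``in particular'' clauses are also handled as in the paper. The problems are in the two remaining arrows. First, $(iv)\Rightarrow(iii)$: there is no ``classical theorem that every $\sigma$-space has countable tightness,'' and the statement is in fact false. Any $T_1$ space that is a countable union of closed discrete subspaces is a $\sigma$-space (the singletons form a $\sigma$-discrete network), and such spaces can have uncountable tightness: take $X_0=\{p\}\cup(\omega\times\omega_1)$ with all points of $\omega\times\omega_1$ isolated and with basic neighborhoods of $p$ of the form $\{p\}\cup\bigcup_{n\geq k}\big(\{n\}\times(\omega_1\setminus C_n)\big)$, $C_n$ countable; each column $\{n\}\times\omega_1$ is closed and discrete, so $X_0$ is a zero-dimensional $\sigma$-space, yet $p\in\overline{\omega\times\omega_1}$ while $p\notin\overline{B}$ for every countable $B$. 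Note that the paper's diagram deliberately has no arrow from ``$\sigma$-space'' to ``countably tight.'' The paper instead proves $(iv)\Rightarrow(i)$ directly: it exhibits the compact set $K=\{\mathbf{0}\}\cup\{\delta_{x,g}:x\in X\}\subseteq H$, which is the one-point compactification of a discrete set of cardinality $|X|$, hence non-metrizable when $X$ is uncountable, and invokes the fact that compact subsets of $\sigma$-spaces are metrizable. You need an argument of this kind; the tightness route is closed.

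Second, your $(v)\Rightarrow(i)$ is explicitly unfinished: the case where some $S_k$ is uncountable is only ``anticipated,'' and the sketched fix does not obviously close. The obstruction is real: a $cs^\ast$-network element $N_k$ need not be contained in any basic neighborhood $[F;V]$ at all (it could be all of $H$), so knowing that each ``loose'' $N_k$ fails to fit inside $[\{z\};V]$ for $z\in S_k$ only yields $\bigcap_k S_k=\emptyset$ over the relevant indices, which is compatible with each $S_k$ containing infinitely many terms of your test sequence; no contradiction follows from support bookkeeping alone. The paper again uses the same compact set $K$: a subspace of a space with countable $cs^\ast$-character has countable $cs^\ast$-character, while by Proposition~9 of the Banakh--Zdomskyy paper cited there, the one-point compactification of an uncountable discrete space has uncountable $cs^\ast$-character. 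So both $(iv)\Rightarrow(i)$ and $(v)\Rightarrow(i)$ should be rerouted through $K$; as written, your proof establishes only the equivalence of (i), (ii) and (iii).
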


\begin{proof}
(i)$\Rightarrow$(ii) follows from the fact that $H$ is a subspace of the metrizable group $G^X$, and the implications (ii)$\Rightarrow$(iii)-(v) are clear.

(iii)$\Rightarrow$(i): Suppose for a contradiction that $X$ is uncountable. Fix an open neighborhood $V$ of $0\in G$ such that $0\not\in g+V$.
Set $A:= \sigma(\mathbf{0},g) \subseteq  H$. It is easy to see that $\mathbf{g}\in \overline{A}\setminus A$, where the closure is taken in $H$. Let $B$ be a countable subset of $A$. Set $D:= \bigcup \{ \supp(h): h\in B\}$. Then $D$ is a countable subset of $X$. As $X$ is uncountable there is $z\in X\setminus D$. Set $W:= \big[ \{z\}; V\big]$. Then $h(z)=0 \not\in g+V$ for every $h\in B$, and hence $(\mathbf{g}+W)\cap B=\emptyset$. So $\mathbf{g}\not\in \overline{B}$. Thus the tightness of $H$ is uncountable, a contradiction.

(iv)$\Rightarrow$(i): Suppose for a contradiction that $X$ is uncountable. Since every compact subset of a $\sigma$-space is metrizable (\cite[Corollary~4.7]{gruenhage}), it is sufficient to find a compact subset $K$ of $H$ which is not metrizable.  For every $x\in X$, define a function $\delta_{x,g}: X\to G$ by
\[
\delta_{x,g}(x)=g,\; \mbox{ and } \; \delta_{x,g}(y)=0 \mbox{ if } y\not= x.
\]
It is clear that $\delta_{x,g}\in \sigma(\mathbf{0},g)$. Set $K:=\{ \mathbf{0}\}\cup\{ \delta_{x,g}: x\in X\} \subseteq H$. Then any neighborhood of $\mathbf{0}$ contains all but finitely many of $\delta_{x,g}$s. Therefore $K$ is a compact subset of $H$. However, $K$ does not have countable base at $\mathbf{0}$ because $X$ is uncountable. This contradiction shows that $X$ must be countable. It is easy to see that $K$ is topologically isomorphic to the one point compactification of a discrete space of cardinality $|X|$.

(v)$\Rightarrow$(i):  Suppose for a contradiction that $X$ is uncountable. Consider the compact subset $K$ of $H$ defined in the previous paragraph. Then also $K$ has countable $cs^\ast$-character as a subspace of $H$. However, by Proposition 9 of \cite{BZ}, the  $cs^\ast$-character of $K$ is uncountable. This contradiction shows that $X$ is countable.

In the   cases (a)-(b), by Lemmas \ref{l:E-Tychonoff} and \ref{l:Baire-1}, there is a nonzero $g\in G$ or $g\in S$ such that $\sigma(\mathbf{0},g)\cup\{ \mathbf{g}\}\subseteq H$, and  the last assertion follows. \qed
\end{proof}

A sequence $\{ g_n\}_{n\in\NN}$ in an abelian topological group $G$ is called {\em uniformly discrete} if there is an open symmetric neighborhood $W$ of $0\in G$ such that
\begin{enumerate}
\item[$(\dag)$] $0\not\in g_k +(4)W$ for every $k\in\NN$, and
\item[$(\ddag)$] $g_k -g_n \not\in (4)W$ for all distinct $k,n\in\NN$.
\end{enumerate}

\begin{remark} \label{rem:B1-1} {\em
It follows from  Theorem 5 of \cite{BGP} that:

(i) any non-precompact abelian group $G$ contains a uniformly discrete sequence, and

(ii) if $G$ is an infinite-dimensional normed space, then the closed unit ball $S$ of $G$ contains a uniformly discrete sequence (otherwise, $S$ would be precompact and hence $G$ would be finite-dimensional).\qed }
\end{remark}

It is well known that  for every uncountable cardinal $\kappa$, the space $\ZZ^\kappa$ is neither a $k$-space nor a normal space. In the next two theorems we essentially generalize this result.

\begin{theorem} \label{t:Baire-class-k-space}
Let $G$ be an abelian metrizable group with zero $0$ containing a uniformly discrete sequence $\{ g_n\}_{n\in\NN}$, $X$ be a $G$-Tychonoff first countable space and let $H$ be a subspace of $G^X$ containing $\{ \mathbf{0}\} \cup \bigcup_{n\in \NN} \sigma(\mathbf{g}_n, 0)$. Then $H$ is a $k$-space if and only if $X$ is countable.
\end{theorem}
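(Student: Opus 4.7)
The easy direction is straightforward: if $X$ is countable, then $G^X$ is a countable product of metrizable groups, hence metrizable, so the subspace $H$ is metrizable and therefore a $k$-space.

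For the converse, assume $X$ is uncountable. The plan is to produce a subset $A^*\subseteq H$ which is not closed in $H$ yet whose intersection with every compact $K\subseteq H$ is closed in $K$. For each $n\in\NN$ and finite $F\subseteq X$ write $h_{n,F}$ for the function equal to $0$ on $F$ and to $g_n$ on $X\setminus F$, so that $h_{n,F}\in\sigma(\mathbf{g}_n,0)\subseteq H$. Define
\[
A^*_n:=\{h_{n,F}:F\subseteq X\text{ finite},\ |F|\leq n\},\qquad A^*:=\bigcup_{n\in\NN}A^*_n\subseteq H.
\]
Non-closedness is immediate: any basic pointwise neighborhood $[F_0;V]$ of $\mathbf{0}$ contains $h_{n,F_0}$ for $n\geq|F_0|$, so $\mathbf{0}\in\overline{A^*}$; on the other hand, each $h_{n,F}\in A^*$ equals $g_n\neq 0$ on the nonempty set $X\setminus F$, so $\mathbf{0}\notin A^*$.

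The substantive step is to show $A^*\cap K$ is closed in $K$ for every compact $K\subseteq H$. The uniform discreteness conditions $(\dag),(\ddag)$ make $\{0\}\cup\{g_n:n\in\NN\}$ a closed $\delta$-separated (hence discrete) subset of $G$, so every compact subset of $G$ meets it in a finite set. I will apply this first to show that $N(K):=\{n:A^*_n\cap K\neq\emptyset\}$ is finite: given distinct $n_k\in N(K)$ with witnesses $h_{n_k,F_k}\in K$, each projection $\pi_y(K)\subseteq G$ is compact and contains the values $h_{n_k,F_k}(y)$; for $y\notin F_k$ this value is $g_{n_k}$, so only finitely many $k$ can have $y\notin F_k$, i.e.\ $y\in F_k$ for all but finitely many $k$ and every $y\in X$, forcing $X\subseteq\bigcup_k F_k$ to be countable, a contradiction. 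Second, given a net $h_{n_\alpha,F_\alpha}\in A^*\cap K$ converging to some $f\in K$ in $G^X$, the $\delta$-separation of $\{0\}\cup\{g_n\}$ forces $h_{n_\alpha,F_\alpha}(y)$ to be eventually equal to $f(y)\in\{0\}\cup\{g_n\}$ for every $y$. If some $f(y_0)=g_{m_0}$, then eventually $n_\alpha=m_0$; passing to that tail one has $|F_\alpha|\leq m_0$, so $P:=f^{-1}(0)$ satisfies $|P|\leq m_0$ and $f=h_{m_0,P}\in A^*_{m_0}\subseteq A^*$. If $f=\mathbf{0}$, then each $y\in X$ lies in $F_\alpha$ eventually; by finiteness of $N(K)$ a subnet has $n_\alpha$ equal to a constant $m$, and the uniform bound $|F_\alpha|\leq m$ forces at most $m$ points of $X$ to be in $F_\alpha$ eventually, contradicting uncountable $X$.

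The main obstacle is the delicate choice of $A^*$: the requirement $\mathbf{0}\in\overline{A^*}$ demands that arbitrarily large finite sets $F$ occur as zero-sets of elements of $A^*$, while the requirement $\mathbf{0}\notin\overline{A^*\cap K}$ for every compact $K$ demands some uniform bound on $|F|$. The constraint $|F|\leq n$ inside $A^*_n$, together with the bound on $n$ provided by the finiteness of $N(K)$, is what reconciles these competing demands, and both bounds depend crucially on the uniform discreteness of $\{g_n\}$.
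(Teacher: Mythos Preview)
Your proof is correct and follows essentially the same approach as the paper: the set $A^*$ you define is exactly the set $A$ in the paper's proof, and both arguments hinge on showing that $\mathbf{0}\in\overline{A}\setminus A$ while $A\cap K$ is closed in every compact $K\subseteq H$. The only difference is in execution: the paper argues via neighborhoods (its Claims~1--4, using a pointwise bound $k(x)$ and a pigeonhole on an uncountable $Y\subseteq X$), whereas you argue via nets and first establish the clean global fact that $N(K)=\{n:A^*_n\cap K\neq\emptyset\}$ is finite, which streamlines the case $f=\mathbf{0}$.
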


\begin{proof}
If $X$ is countable, then $H$ being a subspace of the metrizable group $G^X$ is metrizable, and hence $H$ is a $k$-space.  Assume that $H$ is a $k$-space and suppose for a contradiction that $X$ is uncountable.

Denote by $A$ the set of all functions $f:X\to G$ for which there is an $n\in\NN$ such that $\big|X\setminus\supp(f)\big|\leq n$ and $f(x)=g_n$ for every $x\in\supp(f)$ (so $f\in \sigma(\mathbf{g}_n,0)$). By assumption, $A\subseteq H$. Clearly, $\mathbf{0} \in \overline{A}\setminus A$, where the closure is taken in $H$. Therefore, to prove that $H$ is not a $k$-space it is sufficient to show that the set $A\cap K$ is closed in $K$ for every compact subset $K$ of $H$.
Set $B:= A\cap K$. We have to show that $\overline{B}=B$. Fix an arbitrary $f\in \overline{B}$, so $f\in K$.

{\em Claim 1. $f(X)\subseteq \{ 0, g_1,g_2, \dots\}$.} Indeed, fix an arbitrary $x\in X$. Then $(\dag)$-$(\ddag)$, the compactness of $K$ and the definition of $A$ imply that there is $k(x)\in\NN$ such that $h(x)\in \{ 0,g_1,\dots,g_{k(x)} \}$ for every $h\in A\cap K$. Therefore, also $f(x)\in \{ 0,g_1,\dots,g_{k(x)} \}$.

{\em Claim 2. $\sigma(\mathbf{0})\cap \overline{B}=\emptyset$.} Indeed, fix an arbitrary $t\in \sigma(\mathbf{0})$. Since $X$ is uncountable, there are $s\in\NN$ and an uncountable subset $Y$ of $X$ such that $k(x)=s$ for every $x\in Y$ (see Claim 1) and $Y\cap \supp(t)=\emptyset$. Let $C$ be a subset of $Y$ such that $|C|>2s$ (so $C\cap \supp(t)=\emptyset$). Then, by $(\dag)$-$(\ddag)$ and the definition of $A$, every $h\in (t+[C;W])\cap A$ must satisfy $h(x)=0$ for every $x\in C$. Now, the definition of $A$ implies that  $h(x)=g_k$ for some $k\geq |C|>2s$ and each $x\in \supp(h)$. As $Y\cap \supp(h)\not=\emptyset$, we obtain that $h(y)=g_k\not\in \{ 0,g_1,\dots,g_{s} \}$ for every $y\in Y\cap \supp(h)$. Therefore $h\not\in A\cap K$ and  $(t+[C;W])\cap (A\cap K)=\emptyset$. Thus $t\not\in \overline{B}$ and the claim is proved.

{\em Claim 3. There is an $m\in\NN$ such that $f(x)=g_m$ for every $x\in\supp(f)$.} Indeed, suppose for a contradiction that there are $y,z\in\supp(f)$ such that $f(y)\not=f(z)$. Since every $h\in A$ takes only one nonzero value, Claim 1 and $(\ddag)$ imply $\big(f+\big[ \{ y,z\}; W\big]\big)\cap A=\emptyset$. Thus $f\not\in \overline{A}$, a contradiction.

{\em Claim 4. If $C:= \{ x:f(x)=0\}$, then $|C| \leq m$.} Indeed, suppose for a contradiction that $|C| > m$. Choose a finite subset $C_0$ of $C$ such that $|C_0|>m$. Claim 2 implies that the support of $f$ is infinite. Therefore we can choose $x\in \supp(f)\subseteq X \setminus C_0$ (so $f(x)=g_m$ by Claim 3) and define
$
V:=\big[ \{x\}\cup C_0; W\big].
$
Then $(f+V)\cap H$ is a neighborhood of $f$ such that, by the definition of $A$ and  $(\dag)$-$(\ddag)$, the intersection $(f+V)\cap A$ is empty. Thus $f\not\in \overline{A}$, a contradiction.

Finally,  Claims 3 and 4 imply that $f\in A$, and hence $f\in B$. Thus $\overline{B}=B$.
\qed
\end{proof}

\begin{corollary} \label{c:Baire-class-k-space}
Let $S$ be an absolutely convex subset of a metrizable lcs $E$ containing a uniformly discrete sequence $\{ g_n\}_{n\in\NN}$, $X$ be a Tychonoff first countable space and let $H$ be a subspace of $E^X$ containing $B_1^b(X,S)$. Then $H$ is a $k$-space if and only if $X$ is countable.
\end{corollary}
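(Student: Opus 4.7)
The plan is to reduce this corollary directly to Theorem \ref{t:Baire-class-k-space} by taking $G := E$. The ``if'' direction is immediate: when $X$ is countable, $E^X$ is a metrizable group and hence its subspace $H$ is metrizable, in particular a $k$-space. Everything reduces to verifying the hypotheses of Theorem \ref{t:Baire-class-k-space} for the ``only if'' direction.

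First I would observe that $E$ itself is an absolutely convex subset of $E$, so Lemma \ref{l:E-Tychonoff}(i) applies and $X$ is $E$-Tychonoff. Moreover, since $E$ is a metrizable lcs it is an abelian metrizable topological group under addition, and the given sequence $\{g_n\}_{n\in\NN}\subseteq S\subseteq E$ is uniformly discrete in $E$ in the sense of the paper (the defining neighborhood $W$ of $0$ is taken inside $E$).

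Next I would check the crucial inclusion
\[
\{\mathbf{0}\}\cup \bigcup_{n\in\NN}\sigma(\mathbf{g}_n,0)\subseteq H,
\]
which is the remaining hypothesis needed to invoke Theorem \ref{t:Baire-class-k-space} with $G=E$. Since $S$ is absolutely convex we have $0\in S$, and also $g_n\in S$ for every $n$. Hence each function $h\in \sigma(\mathbf{g}_n,0)$ takes values in the two-point set $\{0,g_n\}\subseteq S$, so in particular $h\in S^X$ and $\mathrm{Im}(h)$ is bounded in $E$. Lemma \ref{l:E-Tychonoff}(ii), applied with the absolutely convex set $S$ in place of the set called $G$ there, gives $\sigma(\mathbf{g}_n,S)\subseteq B_1(X,S)$; since $\sigma(\mathbf{g}_n,0)\subseteq \sigma(\mathbf{g}_n,S)$ and its elements are bounded, we obtain $\sigma(\mathbf{g}_n,0)\subseteq B_1^b(X,S)\subseteq H$. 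The constant function $\mathbf{0}$ is continuous (as $0\in S$) with bounded image, so $\mathbf{0}\in B_1^b(X,S)\subseteq H$ as well.

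With these verifications in place, Theorem \ref{t:Baire-class-k-space} applied to $G=E$, the space $X$, and the subspace $H$ of $E^X$ yields that $H$ is a $k$-space if and only if $X$ is countable. There is no real obstacle beyond carefully identifying the ambient group and tracking the two containments above; the substantive combinatorial work has already been carried out inside the proof of Theorem \ref{t:Baire-class-k-space}.
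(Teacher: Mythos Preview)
Your proposal is correct and follows essentially the same approach as the paper: both use Lemma~\ref{l:E-Tychonoff} to verify the inclusion $\{\mathbf{0}\}\cup\bigcup_{n\in\NN}\sigma(\mathbf{g}_n,0)\subseteq B_1^b(X,S)\subseteq H$ and then invoke Theorem~\ref{t:Baire-class-k-space} with $G=E$. Your version is simply more explicit about checking the ambient hypotheses (that $X$ is $E$-Tychonoff, that the functions are bounded, etc.), which the paper leaves implicit.
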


\begin{proof}
By Lemma \ref{l:E-Tychonoff}, $\{ \mathbf{0}\}\cup \bigcup_{n\in \NN} \sigma(\mathbf{g}_n, 0)\subseteq B_1^b(X,S)\subseteq H$, and Theorem \ref{t:Baire-class-k-space} applies. \qed
\end{proof}
For example, Corollary  \ref{c:Baire-class-k-space} holds if $S$ is the closed unit ball of an infinite-dimensional normed space $E$, see   Remark \ref{rem:B1-1}.

\begin{theorem} \label{t:Baire-class-normal}
Let $G$ be an abelian metrizable group containing a uniformly discrete  sequence $\{ g_n\}_{n\in\NN}$. Set $T:=\{ 0\} \cup \{ g_n\}_{n\in\NN}$. Assume that  $X$ is a $G$-Tychonoff first countable space and let $H$ be a subspace of $G^X$ containing pointwise limits of sequences from $\sigma(\mathbf{g}_1, T)\cup \sigma(\mathbf{0},T)$. Then $H$ is a normal space if and only if $X$ is countable.
\end{theorem}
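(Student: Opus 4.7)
The easy direction is routine: for countable $X$, the space $G^X$ is a countable product of copies of the metrizable group $G$, hence metrizable, so its subspace $H$ is metrizable and therefore normal.

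For the reverse direction, suppose $X$ is uncountable and assume toward a contradiction that $H$ is normal. Fix distinct $\{x_\alpha:\alpha<\omega_1\}\subseteq X$, and let $W$ be the open symmetric neighborhood of $0\in G$ witnessing $(\dag)$ and $(\ddag)$. A pointwise-approximation argument (truncate to increasing finite subsets of the support, respectively of the $g_1$-exception set) shows that $H$ contains both
\[
\Sigma_0:=\{f\in T^X:|\supp(f)|\le\aleph_0\},\qquad \Sigma_1:=\{f\in T^X:|\{x:f(x)\ne g_1\}|\le\aleph_0\},
\]
and $\Sigma_0\cap\Sigma_1=\emptyset$ since $X$ is uncountable. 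The plan is to exhibit disjoint closed subsets $F_0\subseteq\Sigma_0$ and $F_1\subseteq\Sigma_1$ of $H$ that cannot be separated by disjoint open sets. A natural candidate pair is $F_0:=\Sigma_0\cap\{0,g_2\}^X$ and $F_1:=\Sigma_1\cap\{g_1,g_2\}^X$. They are disjoint (any common element would be identically $g_2$, with uncountable support, contradicting membership in $\Sigma_0$), and each is closed in $H$: for any $f\in H\setminus F_0$ either $f$ has some value $f(y_0)\in T\setminus\{0,g_2\}$ (in which case $f+[\{y_0\};W]$ misses $F_0$ by $(\dag)$--$(\ddag)$) or $f\in\Sigma_1$ (so $f(y_1)=g_1$ for some $y_1$, and $f+[\{y_1\};W]$ misses $F_0$ similarly); and symmetrically for $F_1$.

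To derive the contradiction, assume disjoint open $U\supseteq F_0$ and $V\supseteq F_1$ in $H$. For each $\alpha<\omega_1$ the ``spike'' $s_\alpha\in F_0$ defined by $s_\alpha(x_\alpha)=g_2$ and $s_\alpha(y)=0\ (y\ne x_\alpha)$ admits a basic neighborhood $s_\alpha+[E_\alpha;V_\alpha]\cap H\subseteq U$ with $x_\alpha\in E_\alpha$ finite and $V_\alpha\subseteq W$ open; analogously $t_\beta\in F_1$ with $t_\beta(x_\beta)=g_2$ and $t_\beta(y)=g_1\ (y\ne x_\beta)$ yields $t_\beta+[E'_\beta;V'_\beta]\cap H\subseteq V$. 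The $\Delta$-system lemma applied to $\{E_\alpha\}$ and $\{E'_\beta\}$, together with pigeonhole on a countable neighborhood base of $0\in G$, extracts uncountable $I,J\subseteq\omega_1$ with common finite roots $R,R'$, pairwise disjoint petals $P_\alpha:=E_\alpha\setminus R$ and $Q_\beta:=E'_\beta\setminus R'$ (with $x_\alpha\in P_\alpha,x_\beta\in Q_\beta$ after removing the finitely many $\alpha$ with $x_\alpha\in R$), and uniform tolerances $W^*,W^{**}\subseteq W$. For a pair $\alpha\in I,\beta\in J$ satisfying $P_\alpha\cap Q_\beta=\emptyset$ (available because $P_\alpha$ is finite and the $Q_\beta$ are pairwise disjoint), the function $f$ defined by $f=s_\alpha$ on $E_\alpha$, $f=t_\beta$ on $E'_\beta$, and $f=0$ elsewhere is a finite-support member of $\Sigma_0\subseteq H$ that lies in both basic neighborhoods, yielding $f\in U\cap V$---the required contradiction---provided $E_\alpha\cap E'_\beta=R\cap R'$ permits a coherent definition of $f$.

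The main obstacle is controlling the root intersection $R\cap R'$: a point $y_0\in R\cap R'$ distinct from every $x_\gamma$ forces $s_\alpha(y_0)=0$ and $t_\beta(y_0)=g_1$, yielding the incompatible constraints $f(y_0)\in W^*$ and $f(y_0)\in g_1+W^{**}$ by $(\dag)$; this would allow $U$ and $V$ to be separated via the single coordinate $y_0$. To circumvent it I would enlarge the spike families to $\{s_{\alpha,n}:\alpha<\omega_1,n\ge 2\}\subseteq F_0$ and $\{t_{\beta,n}:\beta<\omega_1,n\ge 2\}\subseteq F_1$ with $s_{\alpha,n}(x_\alpha)=g_n$ and $t_{\beta,n}(x_\beta)=g_n$, exploiting the full richness of the uniformly discrete sequence so that after a refined selection of basic neighborhoods across all ``heights'' $n$ one may force $R\cap R'\subseteq\{x_\gamma:\gamma<\omega_1\}$ and then eliminate the residual obstruction by varying $\gamma$; alternatively, a direct combinatorial argument that chooses $\alpha,\beta$ according to the root configuration bypasses the explicit $\Delta$-system.
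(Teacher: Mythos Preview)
Your proposal has two genuine gaps.

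\textbf{Closedness of $F_0$ and $F_1$.} The theorem allows $H$ to be \emph{any} subspace of $G^X$ containing the indicated pointwise limits; in particular $H=G^X$ is permitted. In that case take $f=\mathbf{g}_2$ (or any element of $\{0,g_2\}^X$ with uncountable support). Then $f\notin F_0$ since $\supp(f)$ is uncountable, $f$ has no value outside $\{0,g_2\}$, and $f\notin\Sigma_1$ since $\{x:f(x)\ne g_1\}=X$. Your dichotomy ``either $f(y_0)\in T\setminus\{0,g_2\}$ for some $y_0$, or $f\in\Sigma_1$'' therefore fails. Worse, $f\in\cl_H(F_0)$: for any finite $E\subseteq X$ and neighborhood $W_0$ of $0$, the function equal to $f$ on $E$ and $0$ off $E$ has finite support, lies in $F_0$, and sits in $f+[E;W_0]$. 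So $F_0$ is not closed in $H$; the same defect afflicts $F_1$. (There is also a smaller slip: $f$ may take values in $G\setminus T$, so the first alternative should read ``$f(y_0)\notin\{0,g_2\}$'', with Hausdorffness rather than $(\dag)$--$(\ddag)$ supplying the separating neighborhood.)

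\textbf{The $\Delta$-system step.} You correctly identify the obstruction---a point $y_0\in R\cap R'$ away from the $x_\gamma$'s forces the incompatible constraints $f(y_0)\in W^*$ and $f(y_0)\in g_1+W^{**}$---but neither proposed remedy is an argument. Enlarging the spike families to vary the height $g_n$ does not constrain where the roots fall: the $\Delta$-system lemma hands you \emph{some} root, not one contained in a prescribed set. And ``a direct combinatorial argument that chooses $\alpha,\beta$ according to the root configuration'' is a hope, not a proof. With only the two values $\{0,g_2\}$ (respectively $\{g_1,g_2\}$) in play, a single fixed coordinate really can separate your $F_0$ from your $F_1$, so the obstruction is not cosmetic.

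The paper avoids both issues by a different construction. It takes $A=\cl_H(D)$ and $B=\cl_H(E)$, where $D$ (respectively $E$) consists of finite-support (respectively finite co-$g_1$-support) $T$-valued functions that use each $g_n$ \emph{at most once}; closedness is then automatic, and the injectivity forces every element of $A$ to have countable support and every element of $B$ countable co-$g_1$-support, whence $A\cap B=\emptyset$. For the contradiction there is no $\Delta$-system: an induction builds increasing finite sets $F_1\subseteq F_2\subseteq\cdots$ and functions $f_{n+1}\in D$ with $f_{n+1}(x_k)=g_k$ on $F_n$, arranging $(f_{n+1}+[F_{n+1};W_{n+1}])\cap H\subseteq\mathcal U$ at each stage. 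The companion functions $h_n\in E$ (same values on $F_n$, constant $g_1$ elsewhere) converge pointwise to some $h$; \emph{here} the hypothesis that $H$ contains pointwise limits is invoked to place $h\in B\subseteq\mathcal V$. A single function $t$, agreeing with $h$ on the finite set controlling its $\mathcal V$-neighborhood and with $f_{n+1}$ on $F_{n+1}$, then lands in $\mathcal U\cap\mathcal V$. The inductive growth of the $F_n$ is precisely what replaces the $\Delta$-system and dissolves your root-intersection problem.
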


\begin{proof}
If $X$ is countable, then $H$ being a subgroup of the metrizable group $G^X$ is metrizable, and hence $H$ is a normal space.
Assume that $H$ is a normal space and suppose for a contradiction that $X$ is uncountable.

Denote by $D$ the family of all functions $f\in G^X$ such that
\begin{enumerate}
\item[(1)] $\supp(f)$ is finite;
\item[(2)] if $x\in \supp(f)$, there is $n\in\NN$ such that $f(x)=g_n$;
\item[(3)]  $|\{ x: f(x)=g_n\}| \leq 1$ for every  $n\in\NN$.
\end{enumerate}
By assumption, $D\subseteq \sigma(\mathbf{0},T)\subseteq H$. Set $A:=\cl_H(D)$.

{\em Claim 1. If $f\in A$, then}
\begin{enumerate}
\item[(4)]  $f\big(\supp(f)\big) \subseteq \{ g_n: n\in\NN\}$;
\item[(5)] if $f\not=\mathbf{0}$, then $|\{ x\in \supp(f): f(x)=g_n\}| \leq 1$ for every $n\in\NN$.
\end{enumerate}

Indeed, let $x\in \supp(f)$. Assuming that $f(x) \not\in \{ g_n: n\in\NN\}$, by $(\dag)$-$(\ddag)$, we can find an open neighborhood $W'\subseteq W$ of $0\in G$ such that $(f(x)+W') \cap  T=\emptyset$. Then, by (2),  $(f+[\{x\};W'])\cap D=\emptyset$. Thus $f\not\in A$. This contradiction proves (4). To prove (5), suppose that $f(y)=f(z)=g_n$ for some $n\in \NN$ and distinct $y,z\in \supp(f)$. Then, by $(\dag)$-$(\ddag)$ and (2)-(3),
$
\big( f+\big[ \{ y,z\}; W\big]\big) \cap D =\emptyset.
$
Hence $f\not\in A$, a contradiction. The claim is proved.

Set $g_0:=0$ and denote by $E$ the family of all functions $f\in G^X$ such that
\begin{enumerate}
\item[(6)] $\supp(f-\mathbf{g}_1)$ is finite;
\item[(7)] if $x\in \supp(f-\mathbf{g}_1)$, there is $n\in\{ 0,2,3,\dots\}$ such that $f(x)=g_n$;
\item[(8)]  $|\{ x: f(x)=g_n\}| \leq 1$ for every  $n\in\{ 0,2,3,\dots\}$.
\end{enumerate}
By assumption, $E\subseteq \sigma(\mathbf{g}_1,T)\subseteq H$. Set $B:=\cl_H(E)$.

{\em Claim 2. If $f\in B$, then}
\begin{enumerate}
\item[(9)]  $f\big(\supp(f-\mathbf{g}_1)\big) \subseteq \{ g_0,g_2,g_3,\dots\}$;
\item[(10)] if $f\not=\mathbf{g}_1$, then $|\{ x\in \supp(f): f(x)=g_n\}| \leq 1$ for every $n\in\{ 0,2,3,\dots\}$.
\end{enumerate}

We omit the proof of Claim 2 because it is similar to the proof of Claim 1.

\smallskip
{\em Claim 3. $A\cap B=\emptyset$.} Indeed, fix $f\in A$. It follows from (4) and (5) that $\supp(f)$ is countable. Since $X$ is uncountable, (9) and (10) imply that $f\not\in B$. Thus $A\cap B=\emptyset$, and the claim is proved.

\smallskip
As $H$ is normal, there are disjoint open sets $\mathcal{U},\mathcal{V} \subseteq H$ such that $A\subseteq \mathcal{U}$ and $B\subseteq \mathcal{V}$.

\smallskip
Now we define inductively a sequence $\{ f_n:n\in\NN\}\subseteq A$ as follows. Set $f_1:=\mathbf{0}$. Choose a finite subset $F_1\subseteq X$ and a neighborhood $W_1 \subseteq W$ of $0\in G$ such that $(f_1 +[F_1;W_1])\cap H \subseteq \mathcal{U}$. Assume that we found functions $f_1,\dots, f_n\in A$, finite sets
\[
F_1 =\{ x_1,\dots,x_{m(1)}\}, \dots, F_n=\{ x_1,\dots,x_{m(n)}\} \mbox{ with } m(1) < \cdots < m(n),
\]
and a decreasing sequence $W_1 \supseteq \cdots \supseteq W_n$ of open neighborhoods of zero in $G$ such that $(f_i+[F_i,W_i])\cap H \subseteq \mathcal{U}$ for every $i=1,\dots,n$. Define $f_{n+1}:X\to G$ by
\[
f_{n+1}(x):= \left\{
\begin{aligned}
g_k, & \mbox{ if } x=x_k \mbox{ for some } 1\leq k\leq m(n), \\
0, & \mbox{ if } x\in X\setminus \{ x_1,\dots,x_{m(n)}\}.
\end{aligned} \right.
\]
Then $f_{n+1}\in D\subseteq A$. So there are distinct points $x_{m(n)+1},\dots, x_{m(n+1)}\in X$ and an open neighborhood $W_{n+1} \subseteq W_n$ of $0\in G$ such that $(f_{n+1} + [F_{n+1};W_{n+1}])\cap H \subseteq \mathcal{U}$, where $F_{n+1} =\{ x_1,\dots,x_{m(n+1)}\}$. The induction is now complete.

For every $n\in\NN$, define $h_n\in E\subseteq B$ by
\[
h_{n}(x):= \left\{
\begin{aligned}
g_k, & \mbox{ if } x=x_k \mbox{ for some } 1\leq k\leq m(n), \\
g_1, & \mbox{ if } x\in X\setminus \{ x_1,\dots,x_{m(n)}\}.
\end{aligned} \right.
\]
Clearly, the sequence $\{h_n\}$ converges in $G^X$ to the function
\[
h(x):= \left\{
\begin{aligned}
g_{k}, & \mbox{ if } x=x_{k} \mbox{ for some } k\in\NN, \\
g_1, & \mbox{ if } x\in X\setminus \{ x_k\}_{k\in\NN}.
\end{aligned} \right.
\]
Therefore, by assumption,  $h\in H$ and hence $h\in B$.

Choose a finite subset $R$ of $X$ and an open neighborhood $W_0$ of zero in $G$ such that $(h+[R;W_0])\cap H \subseteq \mathcal{V}$. Since $R$ is finite, there is an $n\in\NN$ such that $\{ x_k\}_{k\in\NN} \cap R= F_n \cap R$. Now we define a function $t:X\to G$ by
\[
t(x):= \left\{
\begin{aligned}
g_{k}, & \mbox{ if } x=x_{k} \mbox{ for some } 1\leq k\leq m(n), \\
0, & \mbox{ if }  x\in F_{n+1}\setminus F_n =\{ x_{m(n)+1},\dots, x_{m(n+1)}\}, \\
g_1, & \mbox{ if } x\in X\setminus F_{n+1}.
\end{aligned} \right.
\]
Therefore $t\in \sigma(\mathbf{g}_1,T) \subseteq H$. By construction, $t(x)=h(x)$ for every $x\in R$, and hence
\[
t\in (h+[R;W_0])\cap H\subseteq \mathcal{V}.
\]
On the other hand, $t(x)= f_{n+1}(x)$ for every $x\in F_{n+1}$, and hence
\[
t\in (f_{n+1}+[F_{n+1};W_{n+1}])\cap H \subseteq \mathcal{U}.
\]
Therefore $t\in \mathcal{U}\cap \mathcal{V}=\emptyset$, a contradiction. Thus $X$ must be countable.\qed
\end{proof}

\begin{corollary} \label{c:Baire-1-3}
Let $E$ be an infinite-dimensional normed space, $X$  a first countable Tychonoff space and let $H$ contain $B_2^b(X,S)$, where $S$ is the closed unit ball of $E$. Then $H$ is a normal space if and only if $X$ is countable.
\end{corollary}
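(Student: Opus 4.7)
The plan is to reduce everything to Theorem \ref{t:Baire-class-normal} applied with $G := E$. For the easy direction, if $X$ is countable then $E^X$ is a countable product of the metrizable group $E$, hence metrizable; so $H$, being a subspace of a metric group, is metrizable and in particular normal.

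For the nontrivial direction, I would begin by choosing, via Remark \ref{rem:B1-1}(ii), a uniformly discrete sequence $\{g_n\}_{n\in\NN}\subseteq S$; such a sequence exists precisely because $E$ is infinite-dimensional and $S$ is the closed unit ball. Set $T:=\{0\}\cup\{g_n:n\in\NN\}\subseteq S$. Since $E$ is an abelian metrizable group and is absolutely convex as a subset of itself, Lemma \ref{l:E-Tychonoff}(i) guarantees that the first countable Tychonoff space $X$ is automatically $E$-Tychonoff, so the structural hypotheses of Theorem \ref{t:Baire-class-normal} are in place.

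The substantive step is to verify the inclusion hypothesis of Theorem \ref{t:Baire-class-normal}, namely that $H$ contains every pointwise limit of a sequence from $\sigma(\mathbf{g}_1,T)\cup\sigma(\mathbf{0},T)$. Each $f\in\sigma(\mathbf{g}_1,T)\cup\sigma(\mathbf{0},T)$ coincides with a constant function valued in $S$ (either $\mathbf{g}_1$ or $\mathbf{0}$) off a finite set, where it takes values still in $T\subseteq S$; so $f$ is valued in $S$, in particular bounded, and Lemma \ref{l:E-Tychonoff}(ii) puts $f$ in $B_1(X,S)$, hence in $B_1^b(X,S)$. Taking a pointwise limit $f_n\to f$ of such functions gives $f\in B_2(X,S)$; since $S$ is closed in $E$ the limit is still valued in $S$, so $f\in B_2^b(X,S)\subseteq H$.

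With all hypotheses verified, Theorem \ref{t:Baire-class-normal} yields that normality of $H$ forces $X$ to be countable, completing the proof. The only mild obstacle is the routine bookkeeping needed to confirm that the "$S$-valued and bounded" property is inherited by pointwise limits and transmitted through the definition of $B_2^b(X,S)$; beyond that the corollary is a direct application of the theorem.
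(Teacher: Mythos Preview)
Your proof is correct and follows essentially the same approach as the paper: pick a uniformly discrete sequence in $S$ via Remark \ref{rem:B1-1}, use Lemma \ref{l:E-Tychonoff}(ii) to place $\sigma(\mathbf{g}_1,T)\cup\sigma(\mathbf{0},T)$ inside $B_1^b(X,S)$, and then invoke Theorem \ref{t:Baire-class-normal}. You supply a bit more detail (the $E$-Tychonoff verification and the closedness of $S$ to keep limits $S$-valued), but the argument is the same.
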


\begin{proof}
By Lemma \ref{l:E-Tychonoff}, $\sigma(\mathbf{g},S)\subseteq B_1^b(X,E)$ for every $g\in S$. By  Remark \ref{rem:B1-1}, $S$ contains a uniformly discrete sequence $T$ which is of course bounded. As $B_2^b(X,S)\subseteq H$, $H$ contains pointwise  limits of sequence from $\sigma(\mathbf{g}_1, T)\cup \sigma(\mathbf{0},T)$, where $g_1\in T$. Now Theorem \ref{t:Baire-class-normal} applies.\qed
\end{proof}

\begin{corollary} \label{c:Baire-class-Lindelof}
Let $G$ be a non-precompact abelian metrizable group, $X$ be a $G$-Tychonoff first countable space and let $H$ be a subspace of $G^X$ containing $B_2(X,G)$. Then the following assertions are equivalent:
\begin{enumerate}
\item[{\rm (i)}] $X$ is countable and $G$ is separable;
\item[{\rm (ii)}] $H$ is a Lindel\"{o}f space.
\end{enumerate}
\end{corollary}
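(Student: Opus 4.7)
The plan is to prove the two implications separately. For (i)$\Rightarrow$(ii), a routine second-countability argument suffices: if $X$ is countable and the metrizable group $G$ is separable, then $G^X$ is a countable product of separable metrizable spaces, hence itself separable and metrizable, and therefore second countable. Second countability is hereditary, so every subspace of $G^X$ -- in particular $H$ -- is Lindel\"of.

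For (ii)$\Rightarrow$(i), the strategy is to first extract countability of $X$ from normality of $H$, and then to extract separability of $G$ from the Lindel\"ofness of the full product $G^X$. Since $H$ sits inside the metrizable group $G^X$, it is regular; combined with the Lindel\"of hypothesis, this makes $H$ normal. I will then invoke Theorem \ref{t:Baire-class-normal} to force $X$ to be countable. To apply that theorem I must supply a uniformly discrete sequence $\{g_n\}_{n\in\NN}$ in $G$ (available by Remark \ref{rem:B1-1}(i) since $G$ is non-precompact) and verify that $H$ contains all pointwise limits of sequences drawn from $\sigma(\mathbf{g}_1,T)\cup\sigma(\mathbf{0},T)$, where $T:=\{0\}\cup\{g_n:n\in\NN\}$.

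The key observation unlocking the hypothesis of Theorem \ref{t:Baire-class-normal} is that both $\sigma(\mathbf{0},T)$ and $\sigma(\mathbf{g}_1,T)$ are already contained in $B_1(X,G)$. This is a direct application of Lemma \ref{l:Baire-1} in its finite-$N$ form: for any $h$ with finite support modification of a constant $\mathbf{g}$ taking values in $T$, the Hausdorffness and first countability of $X$ let me separate the finitely many moved points by disjoint open neighborhoods, and the lemma produces $h$ as a pointwise limit of continuous functions. Since $B_2(X,G)$ is by definition the collection of pointwise limits of sequences in $B_1(X,G)$, every pointwise limit of a sequence from $\sigma(\mathbf{g}_1,T)\cup\sigma(\mathbf{0},T)$ lies in $B_2(X,G)\subseteq H$. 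Theorem \ref{t:Baire-class-normal} then applies, yielding $X$ countable.

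With $X$ countable, Corollary \ref{c:Baire-1} gives $B_1(X,G)=G^X$, whence $H=G^X$. Assuming $X$ is nonempty, projection onto any coordinate presents $G$ as a continuous image of the Lindel\"of space $G^X$, so $G$ is Lindel\"of, and a Lindel\"of metrizable space is separable. The main obstacle I anticipate is the bookkeeping needed to check that the ``test classes'' $\sigma(\mathbf{g}_1,T)$ and $\sigma(\mathbf{0},T)$ actually live in $B_1(X,G)$ and hence that their sequential pointwise limits live in the assumed $B_2(X,G)\subseteq H$ -- once this matching of definitions is done, the normality step and the coordinate-projection step are essentially automatic.
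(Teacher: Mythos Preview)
Your proof is correct and follows essentially the same route as the paper's own argument: both directions use the same ingredients (separable metrizability of $G^X$ for (i)$\Rightarrow$(ii); Remark~\ref{rem:B1-1}, Lemma~\ref{l:Baire-1}, Theorem~\ref{t:Baire-class-normal}, and Corollary~\ref{c:Baire-1} for (ii)$\Rightarrow$(i)). You spell out two steps the paper leaves implicit --- that Lindel\"of (plus regularity) gives normality, and that $\sigma(\mathbf{g}_1,T)\cup\sigma(\mathbf{0},T)\subseteq B_1(X,G)$ so that its sequential limits lie in $B_2(X,G)\subseteq H$ --- but the underlying strategy is identical.
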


\begin{proof}
(i)$\Rightarrow$(ii): If $X$ is countable and $G$ is separable, then $H$ is a subspace of the separable metrizable group $G^X$. Thus $H$ is a Lindel\"{o}f space.

(ii)$\Rightarrow$(i): The group $G$ contains a uniformly discrete sequence, see Remark \ref{rem:B1-1}. Therefore, by Lemma \ref{l:Baire-1} and Theorem \ref{t:Baire-class-normal}, the space $X$ is countable and hence, by Corollary \ref{c:Baire-1}, $H=G^X$. So also $G$ is Lindel\"{o}f. Being metrizable $G$ must be separable. \qed
\end{proof}

 Now we are ready to prove Theorem \ref{t:Baire-class}.

\medskip
{\em Proof of Theorem \ref{t:Baire-class}.}
(A) follows from Theorem \ref{t:kFU-B1}.

(B) The equivalence of (i)-(v) follows from Theorem \ref{t:Baire-class-tightness} and Corollary \ref{c:Baire-1}. (i) and (vi) are equivalent by Lemma \ref{l:Baire-1} and Theorem \ref{t:Baire-class-k-space}, and (i) and (vii) are equivalent by  Lemma \ref{l:Baire-1} and  Theorem \ref{t:Baire-class-normal}.

(C) follows from Corollary \ref{c:Baire-class-Lindelof}.

(D) Assume that $H$ is \v{C}ech-complete. Then $H$ is a $k$-space (\cite[Theorem~3.9.5]{Eng}), and (B) implies that $X$ is countable and $H=G^X$. Then $G$ being a closed subgroup of $G^X$ is also \v{C}ech-complete. Being metrizable $G$ must be complete. Conversely, assume that $X$ is countable and $G$ is complete. Then, by Corollary \ref{c:Baire-1}, $B_1(X,G)=G^X \subseteq H$. Hence $H=G^X$ is a complete metrizable group. Thus $H$ is \v{C}ech-complete.
\qed

\medskip


Recall that a Tychonoff space $X$ is
\begin{enumerate}
\item[$\bullet$] {\em cosmic } if it is a continuous image of a separable metrizable space;
\item[$\bullet$] {\em analytic} if it is a continuous image of a Polish space;
\item[$\bullet$] {\em $K$-analytic} if it is the image under usco compact-valued map defined on $\NN^\NN$.
\end{enumerate}
It is clear that every cosmic space is separable.
The next corollary completes Theorem \ref{t:Baire-class}.
\begin{corollary} \label{c:Baire-class}
Let $G$ be a non-precompact abelian metrizable group, $X$ a $G$-Tychonoff first countable space and let $H$ be a subgroup of $G^X$ containing $B_1(X,G)$. Then:
\begin{enumerate}
\item[{\rm (E)}] $H$ is locally compact if and only if $X$ is finite and $G$ is locally compact.
\item[{\rm (F)}]  $H$ is  cosmic if and only if $X$ is countable and $G$ is separable.
\item[{\rm (G)}] $H$ is analytic if and only if $X$ is countable and $G$ is analytic.
\item[{\rm (H)}] If $B_2(X,G)\subseteq H$, then $H$ is $K$-analytic if and only if $H$ is analytic if and only if  $X$ is countable and $G$ is analytic.
\end{enumerate}
\end{corollary}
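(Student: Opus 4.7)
The plan is to reduce each of (E)--(H) to the equivalences already established in Theorem \ref{t:Baire-class}(B)--(C) and Corollary \ref{c:Baire-1}, combined with classical facts about products of metrizable groups. In each direction where the "hard" implication is from a topological property of $H$ to countability of $X$, I will use that the property in question implies one of the properties already handled in Theorem \ref{t:Baire-class}(B)--(C) (namely $k$-space, countable tightness, or Lindel\"of); then $X$ is countable, $H = G^X$ by Corollary \ref{c:Baire-1}, and the problem reduces to a question about $G^X$.

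For (E), local compactness implies the $k$-space property, so Theorem \ref{t:Baire-class}(B) gives that $X$ is countable and $H=G^X$. If $X$ were countably infinite, $G^X$ would contain $G^\NN$ as a topological direct factor; a product of nontrivial Hausdorff groups is locally compact only if all but finitely many factors are compact, but $G$ is non-precompact and hence non-compact, a contradiction. So $X$ is finite; then $H=G^X \cong G^{|X|}$ is locally compact iff $G$ is locally compact. For (F), cosmic spaces have countable tightness, so Theorem \ref{t:Baire-class}(B) gives $X$ countable and $H=G^X$. A countable product of metrizable spaces is cosmic iff each factor is, and a cosmic metrizable space is exactly a separable metrizable space; conversely, a countable product of separable metrizable groups is itself separable metrizable and hence cosmic.

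For (G), I use that every analytic space is cosmic, so (F) already yields $X$ countable and $H=G^X$. The coordinate projection $G^X \to G$ is continuous and surjective, so $G$ inherits analyticity from $H$; conversely a countable product of analytic spaces is analytic (since the product of the defining continuous surjections $\NN^\NN \to G$ gives a continuous surjection $(\NN^\NN)^X \cong \NN^\NN \to G^X$). For (H), assuming $B_2(X,G) \subseteq H$, any $K$-analytic space is Lindel\"of, so Corollary \ref{c:Baire-class-Lindelof} forces $X$ to be countable and $G$ to be separable, whence $H=G^X$ by Corollary \ref{c:Baire-1}. The coordinate projection then shows $G$ is $K$-analytic, and a metrizable $K$-analytic space is analytic (a standard fact, since Lindel\"of metrizable is separable, and separable metrizable $K$-analytic coincides with analytic); thus $G$ is analytic and (G) closes the loop.

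The only mild subtlety I anticipate is the step in (E) showing $G^\NN$ is not locally compact when $G$ is non-precompact: the key point is that non-precompact does not automatically mean non-locally-compact, so I must invoke only non-compactness of $G$ (which does follow from non-precompactness, as a precompact Hausdorff group is precisely one with compact completion), and then appeal to the product criterion for local compactness. Everything else is bookkeeping on top of the machinery already developed in Sections \ref{sec:1} and \ref{sec:2}. \qed
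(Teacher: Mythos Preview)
Your proposal is correct and follows essentially the same approach as the paper. The only cosmetic differences are that in (E) you pass through the $k$-space property while the paper goes via \v{C}ech-completeness and (D), and in (F) you use countable tightness where the paper uses the $\sigma$-space property; in both cases these are just different (and equally valid) entry points into Theorem~\ref{t:Baire-class}(B).
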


\begin{proof}
(E) Assume that $H$ is locally compact. Then $H$ is \v{C}ech-complete, and hence, by (B) and (D) of Theorem \ref{t:Baire-class}, $X$ is countable, $G$ is complete and $H=G^X$. As $G$ is not compact, Theorem 3.3.13 of \cite{Eng} implies that $X$ is finite and $G$ is locally compact. The converse assertion is trivial.

(F) Assume that $H$ is cosmic. Then $H$ is a $\sigma$-space, and hence $X$ is countable and $H=G^X$, see (B) of  Theorem \ref{t:Baire-class}. Therefore, $G$ is also cosmic, and hence $G$ is separable. Conversely, if $X$ is countable and $G$ is separable, then, by Corollary \ref{c:Baire-1}, $H=G^X$. Therefore, $H$ is cosmic.

(G) Assume that $H$ is analytic. Then $H$ is cosmic and hence, by (F), $X$ is countable and $G$ is separable.  Corollary \ref{c:Baire-1} implies that $H=G^X$, and hence $G$ is analytic. Conversely, if $X$ is countable and $G$ is analytic, then,   by Corollary \ref{c:Baire-1}, $H=G^X$. Since the countable product of analytic spaces is analytic, we obtain that $H$ is an analytic space.

(H) Assume that $H$ is $K$-analytic. Then $H$ is Lindel\"{o}f, see Proposition 3.4 of \cite{kak}. Then (C) of Theorem \ref{t:Baire-class} implies that $X$ is countable and $G$ is separable. By Corollary \ref{c:Baire-1}, we obtain $H=G^X$. Thus, by Proposition 3.3 of \cite{kak}, $G$ is $K$-analytic. As $G$ is metrizable, it is analytic, see \cite[Proposition 6.3]{kak}. Conversely, if $X$ is countable and $G$ is analytic, then, by (G), $H$ is analytic and hence is $K$-analytic.
\qed
\end{proof}

We do not know whether the additional inclusion ``$B_2(X,G)\subseteq H$'' is essential in Theorem \ref{t:Baire-class} and Corollary \ref{c:Baire-class}.

\medskip
{\bf Acknowledge:} The author thanks Taras Banakh who brought to my attention that $K$-analytic metrizable spaces are analytic.

\medskip



\bibliographystyle{amsplain}

\end{document}